\global\long\def\T#1{#1^{\top}}
\def\ema{\end{pmatrix}}
\def\bma{\begin{pmatrix}}
\def\Re{{\,\rm Re\,}}
\def\rank{{\,\rm rank\,}}
\def\Ker{{\,\rm Ker\,}}
\def\mindeg{{\,\rm mindeg\,}}
\def\maxdeg{{\,\rm maxdeg\,}}
\def\ini{{\,\rm in\,}}
\def\p{\partial}
\theoremstyle{plain}
\newtheorem{theorem}{Theorem}[section]
\newtheorem{lemma}[theorem]{Lemma}
\newtheorem{corollary}[theorem]{Corollary}
\newtheorem{proposition}[theorem]{Proposition}
\theoremstyle{definition}
\newtheorem{definition}[theorem]{Definition}
\newtheorem{example}[theorem]{Example}
\theoremstyle{remark}
\newtheorem{remark}[theorem]{Remark}
\bmdefine{\Bz}{z}
\bmdefine{\Bx}{x}
\bmdefine{\Bt}{t}
\begin{document}

\title{Properties of powers of functions satisfying second-order linear differential equations with applications to statistics}

\author{
Naoki Marumo\thanks{Graduate School of Information Science and Technology, University of Tokyo}, \ 
Toshinori Oaku\thanks{Department of Mathematics, Tokyo Woman's Christian University} \ 
and Akimichi Takemura\footnotemark[1]\ 
}
\date{May, 2014}
\maketitle

\begin{abstract}
We derive properties of powers of a function satisfying a second-order linear differential equation.
In particular we prove that the $n$-th power of the function satisfies an $(n+1)$-th order differential equation and give a simple method for obtaining the differential equation. 
Also we determine the exponents of the differential equation and derive a bound for the degree of the polynomials, which are coefficients in the differential equation.
The bound corresponds to the order of differential equation satisfied by the $n$-fold convolution of the Fourier transform of the function.
These results are applied to some probability density functions used in statistics.
\end{abstract}

\noindent
{\it Keywords and phrases:} \ 
characteristic function,
exponents,
holonomic function,
indicial equation,
skewness

\section{Introduction}
\label{sec:intro}

In statistics it is important to study the distribution of a sum (i.e.\ convolution) of $n$ independent random variables.
Usually the distribution is studied through the characteristic function, because the convolution of probability density functions corresponds to the product of characteristic functions.
If the random variables are identically distributed, then we study the $n$-th power of a characteristic function.
The central limit theorem is proved by analyzing the limiting behavior of the $n$-th power of a characteristic function as $n\rightarrow\infty$.
Often the technique of asymptotic expansion is employed to improve the approximation for large $n$.
However for finite $n$, the exact distribution of the sum of random variables is often difficult to treat.
Hence it is important to develop methodology for studying properties of the $n$-th power of a function.

Recently techniques based on holonomic functions (\cite{kauers_book_11}, Chapter 6 of \cite{hibi_book_13}) have been introduced to statistics and successfully applied to some difficult distributional problems (e.g.\ \cite{nakayama}, \cite{hashiguchi}).
In this paper we investigate the case that the function satisfies a second-order linear differential equation with rational function coefficients, which we call {\em holonomic differential equation}.
In Section 2 we prove that the $n$-th power satisfies an $(n+1)$-th order differential equation and give a simple method for obtaining the differential equation. 
Also we determine the exponents of the differential equation and derive a bound for the degree of the polynomials which appear as coefficients of the differential equation.

As shown in Section 3, there are some important examples in statistics which falls into this case.
We discuss sum of beta random variables and sum of cubes of standard normal random variables.
The differential equations reveal many interesting
properties of the characteristic function and the probability density function of the sum of random variables.
These properties are hard to obtain by other methods.
We end the paper with some discussions in Section 4.

\section{Main results}
\label{sec:main}
In this section we present our main results in Theorems \ref{thm:ode}, \ref{thm:deg} and
\ref{thm:exponents}.  Theorem \ref{thm:ode} gives the differential equation satisfied by the $n$-th power.
Theorem \ref{thm:deg} bounds the degree of coefficient polynomials.
Theorem \ref{thm:exponents} derives exponents of the differential equation.

Let $\mathbf C(x)$ denote the field of rational functions in $x$ with complex coefficients and let
\[
R=\mathbf C(x) \langle \p_x \rangle,  \quad \partial_x = \frac{d}{dx},
\]
denote the ring of differential operators with rational function coefficients.
In $R$, 
the product of $\p_x$ and $a(x)\in \mathbf C(x)$ is defined as $\p_x a(x) = a(x) \p_x + a'(x)$, where $a'(x)$ is the derivative of $a(x)$ with respect to $x$.
In order to distinguish the product  in $R$  and the action of $\p_x$ to a function, we denote the latter by the symbol $\bullet$.
\begin{example}
If we write $\p_x x$, both $\p_x$ and $x$ are the elements of $R$.
Hence $\p_x x = x\p_x+1$.
On the other hand, if we write $\p_x \bullet x$, this $x$ is a function.
Hence $\p_x \bullet x = 1$.
\end{example}

In this paper we study $f(x)$ which is a holonomic function satisfying a second-order differential equation:
\begin{equation}
\Big[ \p_x^2 - a_1(x)\p_x - a_0(x) \Big] \bullet f(x)
= 0,  \qquad a_0(x), a_1(x)\in \mathbf C(x). \label{ode 1}
\end{equation}

\subsection{Order of the differential equation of the $n$-th power and its Fourier transform}
\label{subsec:order-n-th-power}

Let $\bm q_0 = \T{(1,\ 0,\ \dots,\ 0)}$ be an $(n+1)$ dimensional column vector and let
\begin{equation}
A(x)=
\bma
0 & a_0(x) \\
n &  a_1(x) & 2a_0(x) \\
& n-1 & 2a_1(x) & \ddots \\
&& \ddots & \ddots & na_0(x)\\
&&& 1 & na_1(x)
\ema 
\label{eq:qk0}
\end{equation}
be an $(n+1)\times (n+1)$ tridiagonal matrix with entries from $\mathbf C(x)$.
Furthermore define
\begin{align}
\tilde A (x,\p_x) &= A(x) + \p_x I  \label{eq:q_k}
\\
&=\bma
\p_x  & a_0(x) \\
n & \p_x + a_1(x) & 2a_0(x) \\
& n-1 & \p_x + 2a_1(x) & \ddots \\
&& \ddots & \ddots &  (n-1)a_0(x) \\
&&& 2 & \p_x + (n-1)a_1(x) & na_0(x)\\
&&&& 1 & \p_x + na_1(x)
\ema
\nonumber
\end{align}
with entries from $R$.
Let
\begin{align}
Q(x)=(q_{ij}(x))_{\substack{0\leq i \leq n\\0 \leq j \leq n+1}} = (\bm q_0 ,\ \tilde A(x,\p_x) \bullet \bm q_0 ,\  \tilde A(x,\p_x)^2 \bullet \bm q_0 ,\ \dots,\ \tilde A(x,\p_x)^{n+1} \bullet \bm q_0)
\end{align}
be an $(n+1)\times (n+2)$ matrix with entries from $\mathbf C(x)$. 
If we write ${\bm q}_j = \tilde A(x,\p_x)^j \bullet {\bm q}_0$, $j=0,\dots,n+1$, then
\[
{\bm q}_{j+1}=\tilde A(x,\p_x) \bullet {\bm q}_j, 
\]
or writing down the elements we have
\begin{equation}
\label{eq:qij}
q_{i,j+1}(x)
= (n+1-i)q_{i-1,j}(x) + (\p_x + ia_1(x)) \bullet q_{i,j}(x) + (i+1)a_0(x) q_{i+1,j}(x), 
\end{equation}
where $q_{-1,j}(x)=q_{n+1,j}(x)=0$.
Hence it is easy to compute the elements of the columns of $Q(x)$ recursively, starting from the first column.

Define 
\begin{equation}
\label{eq:falling-factorial}
[n]_i= \prod_{k=0}^{i-1} (n-k), \qquad ([n]_0=1).
\end{equation}
From \eqref{eq:qij} we can easily prove 
that $Q(x)$ is an upper-triangular matrix with non-zero
diagonal elements, although $Q(x)$ is not a square matrix (cf.\ Example \ref{ex:qx} below).

\begin{lemma}\label{lem:tri}
$q_{ij}(x)=0$ if $i>j$.
$q_{ii}(x) = [n]_i \neq 0 \ \ (i=0,1,\dots,n)$.
\end{lemma}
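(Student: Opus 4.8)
The plan is to prove both claims simultaneously by induction on the column index $j$. The inductive hypothesis is that, for the $j$-th column, $q_{ij}(x)=0$ whenever $i>j$ and $q_{jj}(x)=[n]_j$ (for $j\le n$). The base case $j=0$ is immediate from $\bm q_0 = \T{(1,0,\dots,0)}$: indeed $q_{00}=1=[n]_0$ and $q_{i0}=0$ for $i>0$.

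For the inductive step I would use the recurrence \eqref{eq:qij}, which expresses $q_{i,j+1}$ in terms of $q_{i-1,j}$, $q_{i,j}$, and $q_{i+1,j}$. First I would verify the vanishing claim: suppose $i>j+1$. Then $i-1>j$, $i>j$, and $i+1>j$, so by the inductive hypothesis all three of $q_{i-1,j}$, $q_{i,j}$, $q_{i+1,j}$ vanish; since $\p_x\bullet 0 = 0$, we get $q_{i,j+1}=0$. Next, for the diagonal entry (assuming $j+1\le n$), I set $i=j+1$ in \eqref{eq:qij}: the term $q_{i,j}=q_{j+1,j}$ vanishes because $j+1>j$, and $q_{i+1,j}=q_{j+2,j}$ vanishes because $j+2>j$; the only surviving contribution is $(n+1-i)q_{i-1,j} = (n+1-(j+1))\,q_{j,j} = (n-j)[n]_j = [n]_{j+1}$, using the definition \eqref{eq:falling-factorial}. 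Finally, to see that $[n]_{j+1}\neq 0$ for $j+1\le n$, note that each factor $n-k$ with $0\le k\le j$ satisfies $n-k\ge n-j\ge 1$, so the product of positive integers is nonzero. This completes the induction and hence the lemma.

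I do not expect any genuine obstacle here; the argument is a routine bookkeeping induction driven entirely by the three-term structure of \eqref{eq:qij}. The one point requiring a moment's care is making sure the boundary conventions $q_{-1,j}=q_{n+1,j}=0$ are consistent with the argument (they are: when $i=0$ the term $q_{-1,j}$ is absent, and when $i=n$ the term $q_{n+1,j}$ is absent, but neither case affects the vanishing-above-diagonal or diagonal computations, since those concern indices $i\le n$). It is also worth remarking that the lemma only asserts the diagonal formula for $i\le n$; for $j=n+1$ there is no diagonal entry in the relevant range, consistent with $Q(x)$ being $(n+1)\times(n+2)$.
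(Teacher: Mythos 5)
Your proposal is correct and follows exactly the paper's argument: induction on the column index $j$, with the base case read off from $\bm q_0$ and the inductive step using the three-term recurrence \eqref{eq:qij} to get $q_{i,j+1}=0$ for $i>j+1$ and $q_{j+1,j+1}=(n-j)[n]_j=[n]_{j+1}$. Your additional remarks on the boundary conventions and nonvanishing of $[n]_{j+1}$ are fine but just spell out details the paper leaves implicit.
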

\begin{proof}
We use induction on $j$.
The result is trivial for $j=0$.
Assume $q_{ij}(x) = 0\ (i>j)$ and $q_{j,j}(x) = [n]_j$.
Then by \eqref{eq:qij} we have
$q_{i,j+1}(x) = 0\ (i>j+1)$ and $q_{j+1,j+1}(x) = (n-j)[n]_j=[n]_{j+1}$.
\end{proof}

This lemma implies $\rank Q(x)=n+1$, or $\dim \Ker Q(x) = 1$.
Hence the element of $\Ker Q(x)$ is unique up to the multiplication of a rational function.
Here note that we are using the linear algebra over $\mathbf C(x)$.

Let 
\begin{equation}
\label{eq:kernel-vector}
\bm v(x)=(v_{i}(x))_{0\leq i \leq n+1}  \in \Ker Q(x), \quad \bm v(x)\neq \bm 0,
\end{equation}
where 
$v_i(x)\in \mathbf C(x)$, $i=0,\dots,n+1$.
Once we set $v_{n+1}(x)\neq 0$, then by the triangularity of $Q(x)$, 
$v_{n}(x), v_{n-1}(x), \dots, v_0(x)$ are successively determined.
Moreover, if we set $v_{n+1}(x)=0$, then we obtain  $v_n(x)=\dots=v_0(x)=0$.
Hence $v_{n+1}(x)\neq 0$ for $\bm v(x)\neq \bm 0$. 
Often we set $v_{n+1}(x)=1$.  For theoretical investigation it is convenient to clear the common
denominators of $v_i(x)$'s and take $v_i(x)$'s as polynomials.

\begin{example}
\label{ex:qx}
Let $n=3$ and let $a_0(x) = 1+x^{-2},\ a_1(x) = -x^{-1}$.
Then
\begin{align}
Q(x)=
\bma
 1 &  0 &  3 + 3x^{-2} &  -3x^{-1} - 9x^{-3} & 21 + 51x^{-2} + 54x^{-4} \\
 0 &  3 &       -3x^{-1} &    21 + 27x^{-2} & -66x^{-1} - 144x^{-3}\\
 0 &  0 &          6 &               -18x^{-1} &          60 + 126x^{-2} \\
 0 &  0 &          0 &                   6 &                   -36x^{-1} \\
\ema
\end{align}
If we set $v_4(x)=1$, we successively obtain
\begin{align}
v_3(x) = 6x^{-1},\quad
v_2(x) = -10-3x^{-2},\quad
v_1(x) = -30x^{-1}-9x^{-3},\quad
v_0(x) = 9 + 6x^{-2} + 9x^{-4}.
\end{align}
Multiplying by $x^4$ we obtain $\bm v$ with polynomial elements.
\end{example}

We now derive a holonomic differential equation satisfied by the $n$-th power of the holonomic function $f(x)$.

\begin{theorem}\label{thm:ode}
The $n$-th power of $f(x)$ satisfies the following $(n+1)$-th order holonomic differential equation:
\begin{align}
\Big[
v_{n+1}(x)\p_x^{n+1} + v_n(x) \p_x^n + \dots + v_1(x) \p_x + v_0(x)
\Big]\bullet  f(x)^n
=0, \label{ode n}
\end{align}
where $v_i(x)$'s are given in \eqref{eq:kernel-vector}.
\end{theorem}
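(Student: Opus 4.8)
The plan is to track how the successive derivatives of $f(x)^n$ decompose in terms of the auxiliary family of functions
\[
w_i(x) = f(x)^{n-i} f'(x)^i, \qquad i = 0, 1, \dots, n,
\]
so that $w_0(x) = f(x)^n$. The key assertion will be that the $j$-th column of $Q(x)$ records the coefficients of $\p_x^j \bullet f(x)^n$ expanded in $w_0(x), \dots, w_n(x)$; once this is established, the kernel vector $\bm v(x)$ produces the desired annihilating operator for free.

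First I would compute, by the Leibniz rule and using \eqref{ode 1} to replace $f''(x)$ by $a_1(x) f'(x) + a_0(x) f(x)$, the three-term recursion
\[
\p_x \bullet w_i(x) = i\, a_0(x)\, w_{i-1}(x) + i\, a_1(x)\, w_i(x) + (n-i)\, w_{i+1}(x), \qquad w_{-1}(x) = w_{n+1}(x) = 0;
\]
the vanishing of the coefficient $i$ at $i=0$ and of $n-i$ at $i=n$ is precisely what keeps this recursion within the span of $w_0(x),\dots,w_n(x)$. Then I would prove by induction on $j$ that $\p_x^j \bullet f(x)^n = \sum_{i=0}^n q_{ij}(x)\, w_i(x)$ for $j = 0, 1, \dots, n+1$. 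The base case $j = 0$ is the identity $f(x)^n = w_0(x)$, encoded by the first column $\bm q_0$ of $Q(x)$. For the inductive step one differentiates $\sum_i q_{ij}(x) w_i(x)$, substitutes the recursion for $\p_x \bullet w_i(x)$, and reads off the coefficient of $w_i(x)$: it equals $(\p_x + i a_1(x)) \bullet q_{ij}(x) + (i+1) a_0(x) q_{i+1,j}(x) + (n+1-i) q_{i-1,j}(x)$, which is exactly $q_{i,j+1}(x)$ by \eqref{eq:qij}. The one place demanding care is this index bookkeeping — matching the shifted sums coming from $\p_x \bullet w_i(x)$ against the sub-, main-, and super-diagonal bands of $\tilde A(x,\p_x)$ — but it is mechanical rather than a genuine obstacle.

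Given the claim, the conclusion is immediate: since $\bm v(x) \in \Ker Q(x)$ we have $\sum_{j=0}^{n+1} q_{ij}(x) v_j(x) = 0$ for each $i = 0, \dots, n$, hence
\[
\sum_{j=0}^{n+1} v_j(x)\, \p_x^j \bullet f(x)^n = \sum_{i=0}^n \Bigl( \sum_{j=0}^{n+1} q_{ij}(x)\, v_j(x) \Bigr) w_i(x) = 0,
\]
which is \eqref{ode n}. The equation is genuinely of order $n+1$ because $v_{n+1}(x) \neq 0$, as observed following \eqref{eq:kernel-vector}, and its coefficients lie in $\mathbf C(x)$ (and may be cleared to polynomials), so it is indeed a holonomic differential equation. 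The only ingredients used are \eqref{ode 1}, the recursion \eqref{eq:qij}, and the remarks surrounding \eqref{eq:kernel-vector}; Lemma \ref{lem:tri} is not needed for the argument itself, though it is what guarantees that $\bm v(x)$ exists and is essentially unique.
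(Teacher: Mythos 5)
Your proposal is correct and follows essentially the same route as the paper: you expand $\p_x^j \bullet f^n$ in the functions $f^{n-i}f'^i$ and show inductively that the coefficients are the columns of $Q(x)$, then contract with the kernel vector — your element-wise use of \eqref{eq:qij} is just the scalar form of the paper's matrix identity $\p_x^k \bullet f^n = (f^n,\dots,f'^n)\,\tilde A(x,\p_x)^k \bullet \bm q_0$. The boundary bookkeeping and the final remark that $v_{n+1}(x)\neq 0$ ensures the order is exactly $n+1$ are both handled correctly.
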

\begin{proof}
By induction we prove
\begin{align}
\p_x^k \bullet f^n
= (f^n ,\ f^{n-1} f',\ \dots,\ f f'^{n-1},\ f'^n)\ \tilde A(x,\p_x)^k \bullet \bm q_0 \label{ind 1}
\end{align}
for any $k\ge 0$. 
It is obvious for $k=0$.
By \eqref{ode 1},
\begin{align}
&\quad \p_x \bullet \Big(f(x)^{n-j}f'(x)^j\Big)\\
& = j f(x)^{n-j} f'(x)^{j-1}f''(x) + (n-j)f(x)^{n-j-1} f'(x)^{j+1}\\
& = j a_0(x)f(x)^{n-j+1}f'(x)^{j-1} + j a_1(x) f(x)^{n-j} f'(x)^j + (n-j)f(x)^{n-j-1} f'(x)^{j+1}
\end{align}
holds for all $j=0,1,\dots,n$, and this leads to
\begin{equation}
\p_x \bullet
(f^n ,\ f^{n-1} f',\ \dots,\ f f'^{n-1},\ f'^n)
= (f^n ,\ f^{n-1} f',\ \dots,\ f f'^{n-1},\ f'^n)\ A(x),
\label{eq:ax1}
\end{equation}
where $A(x)$ is given in \eqref{eq:qk0}.

Hence, assuming \eqref{ind 1} for $k$, we obtain
\begin{align*}
\p_x^{k+1} \bullet f^n
&= \Big( \p_x \bullet (f^n ,\ f^{n-1} f',\ \dots,\ f f'^{n-1},\ f'^n) \Big)\ \tilde A(x,\p_x)^k \bullet \bm q_0\\
&\qquad + (f^n ,\ f^{n-1} f',\ \dots,\ f f'^{n-1},\ f'^n)\ \p_x \bullet \Big( \tilde A(x,\p_x)^k \bullet \bm q_0 \Big)\\
&=  (f^n ,\ f^{n-1} f',\ \dots,\ f f'^{n-1},\ f'^n)\,A(x) \ \tilde A(x,\p_x)^k \bullet \bm q_0  \qquad\qquad (\text{by \eqref{eq:ax1}})\\
&\qquad + (f^n ,\ f^{n-1} f',\ \dots,\ f f'^{n-1},\ f'^n)\ \p_x \tilde A(x,\p_x)^k \bullet \bm q_0\\
&= (f^n ,\ f^{n-1} f',\ \dots,\ f f'^{n-1},\ f'^n)\ (A(x) +\p_x I)\ \tilde A(x,\p_x)^k \bullet \bm q_0 \\
&= (f^n ,\ f^{n-1} f',\ \dots,\ f f'^{n-1},\ f'^n)\ \tilde A(x,\p_x)^{k+1} \bullet \bm q_0.  
\qquad\qquad (\text{by \eqref{eq:q_k}})
\end{align*}
Thus \eqref{ind 1} is proved.
By arranging \eqref{ind 1} for $k=0,1,\dots,n+1$, we have
\begin{align}
(f^n,\ \p_x\bullet f^n,\ \dots,\ \p_x^{n+1}\bullet f^n) = (f^n ,\ f^{n-1} f',\ \dots,\ f f'^{n-1},\ f'^n)\ Q(x).
\end{align}
By multiplying it by $\bm v(x) \in \Ker Q(x)$ from the right, we obtain \eqref{ode n}.
\end{proof}

\begin{remark}
If we just want to show the existence of a holonomic differential equation of order $n+1$,
we have only to consider 
\begin{align}
M = \mathbf C(x) f^n + \mathbf C(x) f^{n-1}f' + \cdots +  \mathbf C(x) ff'^{n-1} + \mathbf C(x) f'^n.
\end{align}
Then $M$ is a left $R$-module as well as a vector space over $\mathbf C(x)$ of dimension at most $n+1$.
Hence $n+2$ elements, $f^n$, $\p_x\bullet f^n$, \dots, $\p_x^{n+1}\bullet f^n$, 
which belong to $M$, are linearly dependent  over $\mathbf C(x)$.
Similarly, we see that when $f(x)$ satisfies a holonomic differential equation of order $r$ $(\geq 3)$, $f(x)^n$ satisfies a holonomic differential equation of order $\binom{n+r-1}{r-1}$.
\end{remark}


There exists a function $f(x)$ satisfying a second-order holonomic differential equation, 
such that $f(x)^n$ does not satisfy any 
holonomic differential equation of order less than $n+1$.
\begin{example}
Let $f(x)=\sin x$, with $f''(x)+f(x)=0$.
We prove by contradiction that $f^n,\ f^{n-1}f', \dots, f'^n$, or $\sin^n x$, $\sin^{n-1}x \cos x$, $\dots$, $\cos^n x$ are linearly independent over $\mathbf C(x)$. 
It is obvious for $n=0$.
Let $m \geq 1$ be the smallest integer such that $\sin^m x, \dots, \cos^m x$ are linearly dependent.
Then, there exist rational functions $q_0(x),\dots,q_m(x)$, not all zero, such that
\begin{align}
q_0(x)\sin^m x + q_1(x)\sin^{m-1}x \cos x+\dots+ q_{m-1}(x)\sin x \cos x^{m-1}+q_m(x)\cos^m x=0. \label{l i}
\end{align}
By putting $x=k\pi\ (k=0,1,\dots)$, $q_m(x)$ has infinite number of zeros, and therefore $q_m(x)$ is identically zero.
Divide the equation \eqref{l i} by $\sin x$, and we obtain $q_0(x)\sin^{m-1} x +\dots+q_{m-1}(x)\cos^{m-1} x=0$, which is a contradiction.

Since $f^n,\ f^{n-1}f', \dots, f'^n$ are linearly independent and the matrix $\tilde Q(x) = (\bm q_0 ,\ \tilde A \bullet \bm q_0 ,\ \dots,\ \tilde A^n \bullet \bm q_0)$ is non-singular by Lemma \ref{lem:tri} over $\mathbf C(x)$,
$(f^n,\ \p_x \bullet f^n,\ \dots,\ \p_x^n \bullet f^n) = (f^n,\ f^{n-1}f', \dots, f'^n)\tilde Q(x)$ are linearly independent over $\mathbf C(x)$. 
Thus, there does not exist a holonomic differential equation of order less than $n+1$ satisfied by $f(x)^n=\sin^n x$.
\end{example}

We have already remarked that we can take $v_i(x)$, $0\le i\le n+1$, as polynomials in \eqref{ode n}. 
Also we can cancel common factors in them.
Hence we can assume that they are coprime polynomials.
We now investigate the highest degree of these  polynomials, 
which is important when the differential equation is Fourier transformed, 
because it is equal to the order of the transformed equation.

For the rest of this subsection we assume that $a_0(x),a_1(x)$ are Laurent polynomials.
Here, we define mindeg and maxdeg of a Laurent polynomial.
\begin{definition}
For a non-zero Laurent polynomial $f(x)=\sum_{k=m}^M c_k x^k\ (m<M,\ c_m\neq 0,\ c_M\neq 0)$, we define
\begin{align}
\mindeg f(x) = m,\qquad
\maxdeg f(x) = M.
\end{align}
We define $\mindeg 0 = \infty$, $\maxdeg 0 = -\infty$.
\end{definition}
Note that for a polynomial $f(x)$, $\maxdeg f(x)=\deg f(x)$.

Now we state the following theorem on the largest degree of the polynomials.

\begin{theorem}\label{thm:deg}
Assume that $a_0(x), a_1(x)$ in \eqref{ode 1} are Laurent polynomials and 
let $m_i = \mindeg a_i(x)$, $M_i = \maxdeg a_i(x)$, $i=0,1$.
Let $v_0(x),\ v_1(x),\ \dots,\ v_{n+1}(x)$ be coprime polynomials in \eqref{ode n}.
If $m_1 \leq -1,\ M_1\geq -1,\ m_0\geq 2m_1,\ M_0\leq 2M_1$, then
\begin{align}
\max_{0\leq k \leq n+1} \deg v_k(x)
\leq \max\{M_0+(n-1)M_1, nM_1, 0 \} - \min\{ m_0, m_1 \} - (n-1)m_1. \label{maxdeg}
\end{align}
\end{theorem}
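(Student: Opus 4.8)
The plan is to reduce the degree bound \eqref{maxdeg} to a bound on the $\maxdeg$ and $\mindeg$ (as Laurent polynomials) of a single, conveniently normalized kernel vector, and then to control those two quantities by running the recursion \eqref{eq:qij}.

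\emph{Step 1: reduction.} Since $a_0,a_1$ are Laurent polynomials, \eqref{eq:qij} shows that every $q_{ij}(x)$ is a Laurent polynomial, and then so is every component of the kernel vector $\bm v(x)$ obtained by back-substitution with $v_{n+1}(x)=1$. Put $U=\max_{0\le i\le n+1}\maxdeg v_i(x)$ and $L=\min_{0\le i\le n+1}\mindeg v_i(x)$, both finite because $v_{n+1}=1$. As $\Ker Q(x)$ is one-dimensional over $\mathbf C(x)$ by Lemma \ref{lem:tri}, the coprime polynomial tuple appearing in \eqref{ode n} equals, up to a nonzero constant, $x^{-L}\bm v(x)$ divided by the g.c.d.\ of its entries; dividing by that g.c.d.\ only lowers degrees, so $\max_k\deg v_k(x)\le\max_i\maxdeg\bigl(x^{-L}v_i(x)\bigr)=U-L$. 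Hence it suffices to prove $U\le U^{*}$ and $L\ge L^{*}$, where $U^{*}=\max\{M_0+(n-1)M_1,\,nM_1,\,0\}$ and $L^{*}=\min\{m_0,m_1\}+(n-1)m_1$; then $U-L\le U^{*}-L^{*}$, which is exactly the right-hand side of \eqref{maxdeg}.

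\emph{Step 2: degrees of the entries of $Q(x)$.} Using the elementary facts $\maxdeg g'(x)\le\maxdeg g(x)-1$ and $\maxdeg\bigl(g(x)h(x)\bigr)=\maxdeg g(x)+\maxdeg h(x)$ (and their $\mindeg$ analogues), together with the hypotheses $M_1\ge-1$, $m_1\le-1$ (which make the $\p_x$ part of $\tilde A$ dominated by its $a_1$ part) and $M_0\le 2M_1$, $m_0\ge 2m_1$ (which make the $a_0$ part, i.e.\ the entry moving up one row, dominated by two steps along $a_1$), I would prove by induction on $j$ via \eqref{eq:qij} that, for $i\le j\le n+1$: $\maxdeg q_{ij}(x)\le(j-i)M_1$ and $\mindeg q_{ij}(x)\ge(j-i)m_1$ when $i\ge1$; while $\maxdeg q_{0j}(x)\le M_0+(j-2)M_1$, $\mindeg q_{0j}(x)\ge m_0+(j-2)m_1$ when $j\ge2$, with $q_{00}=1$ and $q_{01}=0$. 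Row $0$ must be treated separately, because there the diagonal multiplier of $\tilde A$ is $\p_x$ alone with no $a_1$ term; this is the source of the two forms of the bound and, in the next step, of the term $M_0+(n-1)M_1$ in $U^{*}$.

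\emph{Step 3: from $q$ to $v$, and conclusion.} Feeding the bounds of Step 2 into the back-substitution $v_i(x)=-[n]_i^{-1}\sum_{j=i+1}^{n+1}q_{ij}(x)v_j(x)$, a downward induction on $i$ gives $\maxdeg v_i(x)\le(n+1-i)M_1$ and $\mindeg v_i(x)\ge(n+1-i)m_1$ for $1\le i\le n+1$; here every summand contributes the same bound, since $(j-i)M_1+(n+1-j)M_1=(n+1-i)M_1$. For $i=0$ the vanishing of $q_{01}$ removes the only summand that would otherwise be out of range, and the row-$0$ bound of Step 2 yields $\maxdeg v_0(x)\le M_0+(n-1)M_1$ and $\mindeg v_0(x)\ge m_0+(n-1)m_1$. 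Taking the maximum over $i$ gives $U\le\max\{M_0+(n-1)M_1,\,nM_1,\,0\}=U^{*}$ (the $nM_1$ coming from $i=1$, the $0$ from $i=n+1$) and, symmetrically, $L\ge L^{*}$; combined with Step 1 this proves \eqref{maxdeg}. The only genuinely nontrivial part is Step 2 — guessing the right inductive invariant, in particular that row $0$ needs a separate stronger bound, and checking that each of the four inequalities on $m_i,M_i$ is used exactly once to close the induction, with attention to signs (the quantities $(j-i)M_1$ are negative when $M_1<0$, which the hypotheses permit). Steps 1 and 3 are routine once Step 2 is in hand, the only point of care in Step 1 being that the sole denominators occurring are powers of $x$, so that clearing them costs precisely $-L$ in degree.
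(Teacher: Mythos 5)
Your proof is correct and takes essentially the same route as the paper's: the same inductive bounds on $\mindeg q_{ij}$ and $\maxdeg q_{ij}$ (with row $0$ treated separately, the paper's \eqref{degin}), the same back-substitution estimates for the kernel vector normalized by $v_{n+1}=1$, and the same final clearing of denominators. The only cosmetic difference is that you carry out the $\maxdeg$ induction explicitly, where the paper disposes of it by the remark that one may regard Laurent polynomials in $x$ as Laurent polynomials in $x^{-1}$.
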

\begin{proof}
Let $m_{ij}$ denote $\mindeg q_{ij}(x)$.
We prove
\begin{align}
m_{0j} \geq m_0 + (j-2)m_1,\qquad
m_{ij} \geq (j-i)m_1,\quad
(0<i<j),\label{degin}
\end{align}
for $j=2,3,\dots,n+1$ by induction.
It is easy to check them for $j=2$.
Assuming them up to $j$, by \eqref{eq:qij}, we have
\begin{align}
m_{0,j+1}
&\geq \min\{ m_{0j} + m_1,\  m_{1j} + m_0\}\\
&= m_0 + (j-1)m_1,\\
m_{i,j+1}
&\geq \min\{ m_{i-1,j},\ m_{ij} + m_1,\ m_{i+1,j} + m_0 \}\\
&\geq \min\{ m_{i-1,j},\ m_{ij} + m_1,\ m_{i+1,j} + 2m_1 \}\\
&= (j-i+1)m_1, \qquad(0<i<j+1).
\end{align}
Thus, the results are shown by induction.

Hence choosing an element $\tilde {\bm v}(x) = \T{(\tilde v_0(x),\ \dots,\ \tilde v_n(x),\ 1)}$ $\in \Ker Q(x)$, we successively obtain
\begin{align}
&\mindeg \tilde v_n(x) \geq m_1,\quad
\mindeg \tilde v_{n-1}(x) \geq 2m_1,\\
&\qquad \dots,\quad \mindeg \tilde v_1(x) \geq nm_1,\quad
\mindeg \tilde v_0(x) \geq m_0 + (n-1)m_1.
\end{align}
This implies that $\min_{k} \mindeg \tilde v_k(x) \geq \min\{ m_0+(n-1)m_1,\  nm_1,\ 0 \} = \min\{m_0,\ m_1\} + (n-1)m_1$.
By regarding as Laurent polynomials of $x$ as those of $x^{-1}$, we also have $\max_{k} \maxdeg \tilde v_k(x) \leq \max\{ M_0+(n-1)M_1,\  nM_1,\ 0 \}$.
Therefore, clearing the denominators of $\tilde {\bm v}$ of \eqref{ode n}, we obtain \eqref{maxdeg} for the polynomials $v_i(x)$ of $\bm v$.
\end{proof}

Let $D=\mathbf C\langle x, \p_x \rangle$ denote the polynomial ring  in $x$ and $\p_x$ with complex coefficients.
The Fourier transform $\mathcal F$, which is a ring isomorphism of $D$, is defined by (Section 6.10 of \cite{hibi_book_13})
\begin{align}
\mathcal F : x\mapsto i\p_x,\qquad
\mathcal F : \p_x\mapsto ix,\qquad
(i=\sqrt{-1}).
\label{eq:fourier-corespondence}
\end{align}
Hence the Fourier transform $\hat L(x,\p_x)$ of $L(x,\p_x)\in D$ is given by $L(i\p_x, ix)$.

This definition is based on the fact that 
if a function $f(x)$ satisfies the differential equation $L(x,\p_x) \bullet f(x) = 0$, then the Fourier transform $\hat f(\xi) = \int_{-\infty}^\infty e^{-ix\xi} f(x)\, dx$ satisfies the differential equation $\hat L(x,\p_x) \bullet \hat f(x)=0$ under some regularity conditions.
If $f$ is a rapidly decreasing holonomic function, then the correspondence 
\eqref{eq:fourier-corespondence} is immediate (Section 5.1.4 of \cite{stein_book}). 
The correspondence can be justified
in the class of slowly increasing functions.
See Chapter 5 of \cite{grubb_book}.

We take $v_i(x)$'s as coprime polynomials in \eqref{ode n} and then take the Fourier transform.
By the correspondence \eqref{eq:fourier-corespondence},
the highest degree of the coefficient polynomials of $L$ equals the order of $\hat L$.  
Hence we have the following corollary.

\begin{corollary}\label{cor:con}
Under the condition of Theorem \ref{thm:deg}, 
there exists a holonomic differential equation satisfied by the $n$-th convolution of $\mathcal F[f(x)]$ whose order is less than or equal to the right-hand side of 
\eqref{maxdeg}:
\[
\max\{M_0+(n-1)M_1,\  nM_1,\ 0 \} - \min\{ m_0,\ m_1 \} - (n-1)m_1.
\]
\end{corollary}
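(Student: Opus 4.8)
The plan is to read the corollary off from three facts already established, together with the classical convolution theorem. By Theorem \ref{thm:ode}, $f(x)^n$ satisfies the $(n{+}1)$-st order equation \eqref{ode n}; Theorem \ref{thm:deg} bounds the degrees of its coefficient polynomials by the right-hand side of \eqref{maxdeg}; the Fourier correspondence \eqref{eq:fourier-corespondence} turns a differential operator whose coefficient polynomials have degree $\le d$ into an operator of order $\le d$; and the convolution theorem identifies $\mathcal F[f^n]$ with a nonzero constant multiple of the $n$-fold convolution $\mathcal F[f]*\cdots*\mathcal F[f]$ --- the ``$n$-th convolution'' of the statement.

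Concretely I would argue as follows. First, by Theorem \ref{thm:ode} and clearing denominators, write \eqref{ode n} as $L\bullet f(x)^n=0$ with $L=\sum_{k=0}^{n+1}v_k(x)\partial_x^k\in D$, where the $v_k(x)$ are coprime polynomials; in particular $L\neq 0$. Put $g=\mathcal F[f]$ and write $g^{*n}=g*\cdots*g$ ($n$ factors), so that by the convolution theorem $\mathcal F[f^n]$ is a nonzero constant times $g^{*n}$. Applying the Fourier transform to $L\bullet f^n=0$ --- and using the fact recalled just above the statement, that $L\bullet u=0$ implies $\hat L\bullet\mathcal F[u]=0$ under appropriate regularity conditions --- we obtain $\hat L\bullet\mathcal F[f^n]=0$, hence $\hat L\bullet g^{*n}=0$. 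It remains to bound the order of $\hat L$. By \eqref{eq:fourier-corespondence} a monomial $x^a\partial_x^b$ occurring in $L$ is sent to $i^{a+b}\partial_x^a x^b$, an operator of order $a$ in $\partial_x$; taking the maximum over all terms, $\hat L$ has order $\max_{0\le k\le n+1}\deg v_k(x)$ (for the corollary only $\le$ is needed). By Theorem \ref{thm:deg} this is at most the right-hand side of \eqref{maxdeg}, and $\hat L\neq 0$ since $\mathcal F$ is a ring isomorphism of $D$ and $L\neq 0$. Thus $\hat L\bullet g^{*n}=0$ is a holonomic differential equation satisfied by the $n$-th convolution of $\mathcal F[f]$, of the asserted order.

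The main obstacle I expect is analytic rather than algebraic: one must operate inside a class of functions for which $\mathcal F$, the operator correspondence \eqref{eq:fourier-corespondence}, and the convolution theorem all apply to $f$ and to $f^n$ --- rapidly decreasing functions, or the wider setting of slowly increasing functions / tempered-distribution solutions of holonomic systems indicated by the references cited before the statement. Once $\mathcal F[f^n]=(\text{const})\cdot g^{*n}$ and $\hat L\bullet\mathcal F[f^n]=0$ are justified in such a class, the degree-versus-order accounting, and hence the bound, follow at once from Theorem \ref{thm:deg}. A minor additional point worth recording is that the hypotheses $m_1\le -1$ and $M_1\ge -1$ force $\max_{k}\deg v_k(x)\ge 1$: the representative with $v_{n+1}(x)=1$ has $v_n(x)$ a nonzero scalar multiple of $a_1(x)$, so $\min_i\mindeg v_i(x)\le m_1<0$, and after clearing denominators $v_{n+1}(x)$ itself has positive degree. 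Hence $\hat L$ truly has positive order and the resulting equation is nondegenerate.
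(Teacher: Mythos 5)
Your argument is correct and is essentially the paper's own proof: take \eqref{ode n} with coprime polynomial coefficients $v_k(x)$, note via the correspondence \eqref{eq:fourier-corespondence} that the order of $\hat L$ equals the maximal coefficient degree of $L$, bound that degree by Theorem~\ref{thm:deg}, and identify $\mathcal F[f^n]$ with the $n$-fold convolution of $\mathcal F[f]$ under the regularity conditions the paper cites. Your closing remark that the resulting order is positive is a harmless addition not present in the paper.
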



\subsection{Exponents for the differential equation of the $n$-th power and the Fourier transformed equation}
\label{subsec:exponents}

Consider an $r$-th order differential equation
\begin{equation}
\Big[  (x-a)^r \p_x^r +  (x-a)^{r-1} b_{r-1}(x)\p_x^{r-1} + \dots +  (x-a) b_1(x)\p_x + b_0(x)  \Big] \bullet f(x) = 0. 
\label{eq:r-th-equation}
\end{equation}
If $b_0(x),\dots,b_{r-1}(x)$ are all analytic at $x=a$, then $a$ is said to be a regular singular point for the equation.
If $a=0$ and $b_0(1/x),\dots,b_{r-1}(1/x)$ are all analytic at $x=0$, then $\infty$ is said to be a regular singular point for the equation.

When the equation \eqref{eq:r-th-equation} is holonomic, $a$ is a regular singular point if the denominators of $b_0(x),\dots,b_{r-1}(x)$ do not have a factor $(x-a)$,
and $\infty$ is a regular singular point if $a=0$ and $b_0(x),\dots,b_{r-1}(x)$ are all proper.

When $x_0 \in  \mathrm C \cup \{\infty\}$ is a regular singular point for the equation,
the $r$-th degree equation
\begin{align}
b(\lambda) = [\lambda]_r + b_{r-1}(x_0) [\lambda]_{r-1} + \dots + b_1(x_0)[\lambda]_1 + b_0(x_0) = 0,
\end{align}
where $[\lambda]_i=\lambda(\lambda-1)\dots(\lambda-i+1)$ (cf.\ \eqref{eq:falling-factorial}), 
is called the {\em indicial equation} (Section 9.5 of \cite{hille_book}, Chapter 15 of \cite{ince_book}) 
for \eqref{eq:r-th-equation}
relative to the regular singular point $x_0$.
The roots of the indicial equation are called the {\em exponents}.

The case $x_0\neq \infty$ can be reduced to the case $x_0=0$ by the transform $x-x_0 \mapsto x$
and the case $x_0 = \infty$ can be reduced to $x_0=0$ by $x \mapsto 1/x$.
Hence in the following we put $x_0=0$.

The equation \eqref{eq:r-th-equation} is equal to
\begin{equation}
\Big[ [\theta_x]_r +  b_{r-1}(x) [\theta_x]_{r-1} + \dots +  b_1(x)[\theta_x]_1 + b_0(x)  \Big] \bullet f(x) = 0,
\end{equation}
where $\theta_x=x\p_x$ is the Euler operator, since $x^k\p_x^k = [\theta_x]_k$.
This shows that $b(\lambda)$ is obtained by expressing the differential equation in terms of $x$ and $\theta_x$, and substituting $x=0$ and $\theta_x = \lambda$ formally.

In this subsection 
we assume that $x_0  \in \mathrm C \cup \{\infty\}$ is a regular singular point for the equation \eqref{ode 1} for $f(x)$.
Let $\lambda_1,\lambda_2$ be the exponents for \eqref{ode 1} relative to the regular singular point $x_0$.

We show the following lemma on the eigenvalues of a matrix before the proof of 
Theorem \ref{thm:exponents} on the exponents for \eqref{ode n} relative to $x_0$.

\begin{lemma}\label{lem:eig}
The eigenvalues of an $(n+1)\times(n+1)$ tridiagonal matrix
\begin{align}
M =
\bma
0  & -\lambda_1\lambda_2 \\
n & \lambda_1+\lambda_2 & -2\lambda_1\lambda_2 \\
& n-1 & 2(\lambda_1+\lambda_2) & \ddots \\
&& \ddots & \ddots &  -(n-1)\lambda_1\lambda_2 \\
&&& 2 & (n-1)(\lambda_1+\lambda_2) & -n\lambda_1\lambda_2\\
&&&& 1 & n(\lambda_1+\lambda_2)
\ema
\end{align}
are 
\[
(n-k) \lambda_1 + k \lambda_2, \qquad (k=0,1,\dots,n).
\]
\end{lemma}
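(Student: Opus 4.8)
The plan is to recognize $M$ as the matrix, in a monomial basis, of the $n$-th symmetric power (taken as a derivation) of a $2\times2$ matrix whose eigenvalues are $\lambda_1$ and $\lambda_2$, and then to read off the eigenvalues of $M$ by a weight computation on $\mathrm{Sym}^n$.

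First I would make the identification precise. Observe that $M$ is exactly the matrix $A(x)$ of \eqref{eq:qk0} under the substitution $a_1(x)\mapsto\lambda_1+\lambda_2$, $a_0(x)\mapsto-\lambda_1\lambda_2$. More invariantly, let $\mathbf C^2$ have basis $e_1,e_2$, let $B$ be the linear map with $Be_1=e_2$ and $Be_2=-\lambda_1\lambda_2\,e_1+(\lambda_1+\lambda_2)\,e_2$, so that $B$ has characteristic polynomial $t^2-(\lambda_1+\lambda_2)t+\lambda_1\lambda_2=(t-\lambda_1)(t-\lambda_2)$, and let $D_B$ be the unique derivation of the polynomial ring $\mathbf C[e_1,e_2]=\mathrm{Sym}(\mathbf C^2)$ extending $B$. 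Writing $b_j=e_1^{\,n-j}e_2^{\,j}$ $(j=0,\dots,n)$ for the monomial basis of the degree-$n$ part $\mathrm{Sym}^n(\mathbf C^2)$, the Leibniz rule gives $D_B(b_j)=-j\lambda_1\lambda_2\,b_{j-1}+j(\lambda_1+\lambda_2)\,b_j+(n-j)\,b_{j+1}$; this is the same Leibniz-rule computation that yields \eqref{eq:ax1}, and its coefficients reproduce the three diagonals of $M$. Hence $M$ is precisely the matrix of $D_B$ restricted to $\mathrm{Sym}^n(\mathbf C^2)$ in the basis $b_0,\dots,b_n$.

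Next I would compute the eigenvalues of $D_B|_{\mathrm{Sym}^n}$, assuming first $\lambda_1\neq\lambda_2$. Then $B$ is diagonalizable, say $B\eta_i=\lambda_i\eta_i$ with $\{\eta_1,\eta_2\}$ a basis of $\mathbf C^2$, so $\{\eta_1^{\,n-k}\eta_2^{\,k}:k=0,\dots,n\}$ is a basis of $\mathrm{Sym}^n(\mathbf C^2)$. Since $D_B$ is a derivation, $D_B(\eta_1^{\,n-k}\eta_2^{\,k})=\bigl((n-k)\lambda_1+k\lambda_2\bigr)\eta_1^{\,n-k}\eta_2^{\,k}$, so this basis diagonalizes $D_B|_{\mathrm{Sym}^n}$ and its eigenvalues, hence those of $M$, are exactly $(n-k)\lambda_1+k\lambda_2$, $k=0,\dots,n$.

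Finally, to drop the hypothesis $\lambda_1\neq\lambda_2$, I would specialize: $\det(tI-M)$ and $\prod_{k=0}^{n}\bigl(t-(n-k)\lambda_1-k\lambda_2\bigr)$ are polynomials in $(t,\lambda_1,\lambda_2)$ agreeing on the Zariski-dense set $\{\lambda_1\neq\lambda_2\}$, hence agreeing identically; equivalently, one runs the previous paragraph over the field $\mathbf C(\lambda_1,\lambda_2)$ and then specializes. I do not expect any genuine obstacle here: the only point needing care is the bookkeeping in the identification of $M$ with $D_B$ on $\mathrm{Sym}^n(\mathbf C^2)$ (the row/column convention of \eqref{eq:ax1}, the precise choice of $B$, and the ordering of the monomial basis), and even a slip there is harmless since $M$ and its transpose share eigenvalues. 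A more hands-on alternative would be to expand each $\eta_1^{\,n-k}\eta_2^{\,k}$ in the basis $\{b_j\}$ and thereby write down the eigenvectors of $M$ explicitly, but the symmetric-power description makes their linear independence transparent.
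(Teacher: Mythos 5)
Your proof is correct, and it takes a genuinely different route from the paper. The paper's argument is hands-on: it first replaces $M$ by a balanced tridiagonal matrix $M'$ (using that the characteristic polynomial of a tridiagonal matrix depends only on the diagonal entries and the products of paired off-diagonal entries), then exhibits explicit eigenvectors whose entries are the binomial sums $v_l^k=\sum_j\binom{n-k}{l-j}\binom{k}{j}z^j$, verifies them through binomial-coefficient identities, and treats the degenerate cases ($\lambda_1=0$, and $\lambda_1=\lambda_2\neq 0$ via generalized eigenvectors) separately. You instead identify $M$, via the same Leibniz computation that underlies \eqref{eq:ax1}, as the matrix of the derivation $D_B$ induced on $\mathrm{Sym}^n(\mathbf C^2)$ by the companion matrix $B$ of $(t-\lambda_1)(t-\lambda_2)$; the eigenvalues $(n-k)\lambda_1+k\lambda_2$ then fall out immediately in the generic case from the eigenbasis $\eta_1^{\,n-k}\eta_2^{\,k}$, and the confluent case is absorbed by a polynomial-identity (specialization) argument rather than a Jordan-chain construction. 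Your identification of $M$ with $A(x)$ under $a_1\mapsto\lambda_1+\lambda_2$, $a_0\mapsto-\lambda_1\lambda_2$ is exactly right, and the basis property of $\{\eta_1^{\,n-k}\eta_2^{\,k}\}$ plus the distinctness of the eigenvalues when $\lambda_1\neq\lambda_2$ makes the characteristic polynomial claim, and hence the specialization step, watertight. What each approach buys: the paper's computation is elementary and self-contained and delivers explicit eigenvectors and even a generalized-eigenvector basis in the case $\lambda_1=\lambda_2$ (information your specialization argument does not produce); your argument is shorter, avoids the binomial identities and the reduction to $M'$ altogether, conceptually explains where the paper's eigenvectors come from (they are the expansions of $\eta_1^{\,n-k}\eta_2^{\,k}$ in the monomial basis), and generalizes naturally, e.g.\ to an $r$-th order equation, where the analogous matrix is the induced derivation on $\mathrm{Sym}^n(\mathbf C^r)$ and the exponents become the weights $k_1\lambda_1+\dots+k_r\lambda_r$ with $k_1+\dots+k_r=n$, in line with the paper's remark on higher-order equations.
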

\begin{proof}
The eigenvalues of $M$ are equal to those of the matrix
\begin{align}
&M' =
\bma
0  & \lambda_1 \\
-n\lambda_2 & \lambda_1+\lambda_2 & 2\lambda_1 \\
& -(n-1)\lambda_2 & 2(\lambda_1+\lambda_2) & \ddots \\
&& \ddots & \ddots &  (n-1)\lambda_1 \\
&&& -2\lambda_2 & (n-1)(\lambda_1+\lambda_2) & n\lambda_1\\
&&&& -\lambda_2 & n(\lambda_1+\lambda_2)
\ema
.
\end{align}
because the determinant of a tridiagonal matrix $T=(t_{ij})$ depends only on the 
diagonal elements $t_{ii}$ and the products of off-diagonal elements $t_{i,i+1}t_{i+1,i}$.

If $\lambda_1=0$, it is obvious that the eigenvalues are $0,\ \lambda_2,\ \dots,\ n\lambda_2$.
Otherwise, putting $z=\lambda_2/\lambda_1$, we prove that the eigenvalues of the matrix $M'/\lambda_1$ are $\mu_k = kz+(n-k)\ (k=0,1,\dots,n)$.

For $z\neq 1$, all of $\mu_k$'s are different.
We show that the eigenvector corresponding to $\mu_k$ is $\bm v^k =(v_l^k)_{0\leq l \leq n}$ where
\begin{align}
v_l^k
= \sum_{j} \binom{n-k}{l-j} \binom{k}{j} z^j. \label{v_l^k}
\end{align}
Here, the summation for $j$ is over the finite interval $\max\{0,\ k+l-n \} \leq j \leq \min\{ k,\ l \}$.

The $l$-th entry $(0\leq l \leq n)$ of $(\mu_k I-M'/\lambda_1)\bm v^k$ equals
\begin{align}
\sum_j \Bigg[ 
&(n-l+1)\binom{n-k}{l-j}\binom{k}{j-1}
+ (n-k-l)\binom{n-k}{l-j}\binom{k}{j}\\
&\qquad + (k-l) \binom{n-k}{l-j+1}\binom{k}{j-1}
- (l+1) \binom{n-k}{l-j+1}\binom{k}{j}\Bigg] z^j. \label{sumbin}
\end{align}
The first two terms equal
\begin{align}
&\quad \Bigg[ (k+1)\binom{k}{j-1} + (n-k-l)\binom{k+1}{j} \Bigg] \binom{n-k}{l-j} \\
& = (n-k-l+j)\binom{n-k}{l-j} \binom{k+1}{j}
=(n-k)\binom{n-k}{l-j+1}\binom{k+1}{j},
\end{align}
by the relations $\binom{n}{k-1}+\binom{n}{k}=\binom{n+1}{k}$, $k\binom{n}{k}=n\binom{n-1}{k-1}$ and $\binom{n}{k}=\binom{n}{n-k}$.
Similarly, the last two terms equal $-(n-k)\binom{n-k}{l-j+1}\binom{k+1}{j}$.
Those show that $(\mu_k I-M'/\lambda_1)\bm v^k = \bm 0$.

For $z=1$, all of $\mu_k$'s are identical.
Let $\bm v^k =(v_l^k)_{0\leq l \leq n}$ $(k=0,1,\dots,n)$ be $(n+1)$ dimensional vectors where
\begin{align}
v_l^k = \binom{n-k}{l}, \quad
(0\leq l \leq n-k),\qquad
v_l^k = 0,\quad
(n-k < l \leq n).
\end{align}
Then, we can show $(nI-M'/\lambda_1)\bm v^0=\bm 0$ and $(nI-M'/\lambda_1)\bm v^k = k \bm v^{k-1}$ $(k=1,\dots,n)$ as above.
Hence $\bm v^k$'s, which are linearly independent,  are the generalized eigenvectors of the matrix.
\end{proof}

\begin{remark}
\eqref{v_l^k} can be formally written as follows:
\begin{align}
v_l^k
= \binom{n-k}{l} \,{}_2F_1(-k,-l,n-k-l+1;z),
\end{align}
where
\begin{align}
{}_2F_1(a,b,c;z)
= \sum_{n=0}^\infty \frac{(a)_n (b)_n}{(c)_n} \frac{z^n}{n!},\qquad
(a)_n = \prod_{k=0}^{n-1} (a+k).
\end{align}
Then, the $l$-th entry $(0\leq l \leq n)$ of $(\mu_k I-M'/\lambda_1)\bm v_k = \bm 0$ is equivalent to
\begin{align}
& c(c-1) \, {}_2F_1(-k,-l-1,c-1;z)
 - c\big( c-1+(k-l)z \big) \, {}_2F_1(-k,-l,c;z)\notag\\
&\qquad\qquad\qquad 
-l(c+k) z\,{}_2F_1(-k,-l+1,c+1;z) = 0,
\label{eq:2f1-recursion}
\end{align}
where $c=n-k-l+1$.
The recursion \eqref{eq:2f1-recursion} can be confirmed by \texttt{HolonomicFunction} (\cite{hfs}), a package of {\tt Mathematica}.
\end{remark}

We now show the following theorem on the exponents for \eqref{ode n}.

\begin{theorem}
\label{thm:exponents} If $x_0$ is a regular singular point for \eqref{ode 1}, then
$x_0$ is a regular singular point for \eqref{ode n}.
Moreover, its exponents for \eqref{ode n} are
\begin{align}
(n-k) \lambda_1 + k \lambda_2, \qquad (k=0,1,\dots,n).
\end{align}
\end{theorem}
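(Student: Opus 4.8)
The plan is to connect the exponents of the $n$-th power equation \eqref{ode n} at $x_0$ to the matrix $\tilde A(x,\p_x)$ used to construct it, and then read off the exponents from the eigenvalues of an associated constant matrix via Lemma \ref{lem:eig}. Without loss of generality we take $x_0=0$ (the reductions $x-x_0\mapsto x$ and $x\mapsto 1/x$ are already noted in the text). Since $0$ is a regular singular point for \eqref{ode 1}, we have $a_1(x)=\alpha_1/x + (\text{analytic})$ and $a_0(x)=\alpha_0/x^2 + (\text{analytic terms of order}\ge -1)$, where the indicial equation of \eqref{ode 1} is $\lambda(\lambda-1)-\alpha_1\lambda-\alpha_0=0$, so that $\lambda_1+\lambda_2=\alpha_1+1$ and $\lambda_1\lambda_2=-\alpha_0$. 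Rewrite the first-order system for the vector $\bm w=\T{(f^n, f^{n-1}f',\dots,f'^n)}$, namely $\p_x\bullet \bm w = A(x)^{\!\top}\bm w$ coming from \eqref{eq:ax1}, in terms of $\theta_x=x\p_x$: multiplying by $x$ gives $\theta_x\bullet\bm w = x A(x)^{\!\top}\bm w$, and the claim is that $xA(x)^{\!\top}\big|_{x=0}$ is exactly the transpose of a matrix of the shape appearing in Lemma \ref{lem:eig} (with the given $\lambda_1,\lambda_2$). Concretely, $xA(x)$ has diagonal entries $i a_1(x)\cdot x \to i\alpha_1$ and $i(i-1)$ off-diagonal contributions, matching $M$ after the shift $i\alpha_1 \leftrightarrow i(\lambda_1+\lambda_2)-i$; one checks that the constant parts of $x\,\tilde A(x,\p_x)=\theta_x I + xA(x)$ at $x=0$ coincide with $M$ of Lemma \ref{lem:eig} once $\theta_x$ is treated as the spectral variable.

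The key steps, in order, are: (1) reduce to $x_0=0$; (2) express the recursion \eqref{ind 1} in Euler-operator form, obtaining $\theta_x^{\,k}\bullet f^n$ (more precisely $[\theta_x]_k\bullet f^n$) as $\T{\bm w}$ times $(x\tilde A)^k\bullet\bm q_0$ modulo higher-order terms in $x$, using $x^k\p_x^k=[\theta_x]_k$; (3) pass to $x=0$, replacing $x\tilde A(x,\p_x)$ by $\theta_x I + M_0$ where $M_0 = \lim_{x\to0} xA(x)$; (4) identify $M_0$ with the matrix $M$ of Lemma \ref{lem:eig} via the dictionary $\alpha_1 = \lambda_1+\lambda_2-1$, $\alpha_0=-\lambda_1\lambda_2$, so that the eigenvalues of $M_0$ are $(n-k)\lambda_1+k\lambda_2$; (5) conclude that the indicial polynomial of \eqref{ode n} is $\prod_{k=0}^n\big(\lambda-((n-k)\lambda_1+k\lambda_2)\big)$, hence $0$ is a regular singular point with exactly those exponents. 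For step (5) one must argue that the indicial polynomial of \eqref{ode n} equals the characteristic polynomial of $M_0$: this follows because $\bm v(x)$ spans $\Ker Q(x)$, and after clearing denominators and passing to lowest-order terms, the indicial equation $b(\lambda)=0$ of \eqref{ode n} is precisely the compatibility condition for $f^n\sim c_0 x^\lambda$ to solve the leading-order system $\theta_x\bullet\bm w = M_0\bm w$, i.e.\ $\lambda$ is an eigenvalue of $M_0$.

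The main obstacle I expect is step (3)–(5): controlling that the "higher-order in $x$" corrections genuinely do not affect the indicial polynomial, and verifying that the indicial equation attached to \eqref{ode n} — which is defined via the scalar operator $\sum v_i(x)\p_x^i$ after normalization as in \eqref{eq:r-th-equation} — coincides with the characteristic polynomial of the leading system matrix $M_0$. One clean way to handle this is to argue directly at the level of formal solutions: the solution space of \eqref{ode n} near a regular singular point is spanned by $(n+1)$ Frobenius-type series, and by Theorem \ref{thm:ode} each solution is a $\mathbf C(x)$-combination of the components of $\bm w$; since the leading behavior of $f$ is governed by $\lambda_1,\lambda_2$, the products $f^{n-k}(f')^k$ have leading exponent $(n-k)\lambda_1+k(\lambda_2-1)+k=(n-k)\lambda_1+k\lambda_2$ (the extra $+k$ coming from $f'\sim \lambda x^{\lambda-1}$ being multiplied back, or more carefully from the Wronskian-type structure), giving $n+1$ candidate exponents; Lemma \ref{lem:eig} then certifies these are exactly the roots of the degree-$(n+1)$ indicial polynomial by a dimension/matching count. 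A secondary technical point is the edge case $\lambda_1=\lambda_2$ (or $\lambda_1-\lambda_2\in\mathbb Z$), where the eigenvalues of $M_0$ collide and one must invoke the generalized-eigenvector computation already carried out in the proof of Lemma \ref{lem:eig} to see that the indicial polynomial still factors as claimed (with multiplicity), which is consistent since the indicial polynomial is insensitive to whether the roots are distinct.
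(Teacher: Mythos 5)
Your overall strategy (pass to the Euler operator, identify the indicial polynomial of \eqref{ode n} with the characteristic polynomial of a constant tridiagonal matrix, and invoke Lemma \ref{lem:eig}) is the same as the paper's, but there is a genuine gap at the heart of steps (3)--(4): the matrix $M_0=\lim_{x\to 0}xA(x)$ does not exist in general. At a regular singular point $a_0(x)$ may (and generically does) have a double pole, $a_0(x)=\alpha_0/x^2+\cdots$ with $\alpha_0=-\lambda_1\lambda_2\neq 0$, so the superdiagonal entries $i\,x\,a_0(x)\sim i\alpha_0/x$ of $xA(x)$ blow up, while the subdiagonal entries $ (n-i+1)x$ tend to $0$; no diagonal ``shift'' of the form $i\alpha_1\leftrightarrow i(\lambda_1+\lambda_2)-i$ can repair this, since it is not a similarity transformation and the limit itself is infinite. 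What is needed is a change of basis: replace $\bm w=(f^n,f^{n-1}f',\dots,f'^n)$ by $(f^n,\ f^{n-1}(\theta_x\bullet f),\dots,(\theta_x\bullet f)^n)$, i.e.\ conjugate by $\diag(1,x,\dots,x^n)$. This is exactly what the paper does: rewriting \eqref{ode 1} as $[\theta_x^2-b_1(x)\theta_x-b_0(x)]\bullet f=0$ with $b_0(x)=x^2a_0(x)$, $b_1(x)=xa_1(x)+1$ \emph{analytic} at $0$, one gets an exact analogue of \eqref{eq:ax1} with an analytic matrix $B(x)$, and $B(0)$ equals $M$ of Lemma \ref{lem:eig} on the nose (Vieta: $b_0(0)=-\lambda_1\lambda_2$, $b_1(0)=\lambda_1+\lambda_2$), with no divergences and no approximation ``modulo higher-order terms.''

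The second problem is that the step you yourself flag as the main obstacle --- that the indicial polynomial of the scalar equation \eqref{ode n} equals the characteristic polynomial of the residue matrix --- is not actually resolved by your sketch. Your fallback argument (reading exponents off the leading behavior of the products $f^{n-k}(f')^k$, plus a ``dimension/matching count'') is heuristic and fragile precisely in the resonant cases ($\lambda_1-\lambda_2\in\mathbb Z$, logarithmic solutions, cancellations of leading terms), and it also does not establish that $x_0$ is a regular singular point for \eqref{ode n} in the first place. The paper closes both points cleanly and algebraically: building $P(x)=(\bm q_0,\tilde B\bullet\bm q_0,\dots,\tilde B^{n+1}\bullet\bm q_0)$ with analytic entries and constant nonzero diagonal shows the normalized kernel vector $\bm w(x)$ has analytic entries (hence $x_0$ is regular singular and the indicial polynomial is $\lambda^{n+1}+w_n(0)\lambda^n+\cdots+w_0(0)$), and then $P(0)=(\bm q_0,B(0)\bm q_0,\dots,B(0)^{n+1}\bm q_0)$ is a Krylov matrix, so $P(0)\bm w(0)=\bm 0$ gives a monic degree-$(n+1)$ polynomial annihilating $\bm q_0$ under $B(0)$, which by the Cayley--Hamilton theorem and uniqueness of $\bm w(0)$ must be the characteristic polynomial of $B(0)=M$. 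You would need to supply both the diagonal rescaling and a Krylov/Cayley--Hamilton-type identification (or an equally rigorous substitute) for your proposal to become a proof.
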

\begin{proof}
We put $x_0=0$ without loss of generality by translation.
Then, the equation \eqref{ode 1} can be rearranged to
\[
\Big[ \theta_x^2 - b_1(x)\theta_x - b_0(x) \Big] \bullet f(x)
= 0, 
\]
where $b_0(x) = x^2a_0(x)$ and $b_1(x) = xa_1(x)+1$ are analytic at $x=0$.
%

Let
\begin{equation}
B(x)=
\bma
0 & b_0(x) \\
n &  b_1(x) & 2b_0(x) \\
& n-1 & 2b_1(x) & \ddots \\
&& \ddots & \ddots & nb_0(x)\\
&&& 1 & nb_1(x)
\ema
\end{equation}
be an $(n+1)\times(n+1)$ tridiagonal matrix and let $\tilde B(x,\theta_x) = B(x)+\theta_x I$, similarly to \eqref{eq:qk0} and \eqref{eq:q_k}.
Then, as in \eqref{ind 1} we have
\begin{equation}
\theta_x^k \bullet f^n
= (f^n ,\ f^{n-1} (\theta_x \bullet f),\ \dots,\ f(\theta_x \bullet f)^{n-1}, \ (\theta_x \bullet f)^n)\ \tilde B(x,\theta_x)^k \bullet \bm q_0.
\label{theta k}
\end{equation}
Let
\begin{align}
P(x) = (p_{ij}(x))_{\substack{0\leq i \leq n\\0 \leq j \leq n+1}} = (\bm q_0,\ \tilde B(x,\theta_x) \bullet \bm q_0,\ \dots,\ \tilde B(x,\theta_x)^{n+1} \bullet \bm q_0)
\end{align}
be an $(n+1)\times(n+2)$ matrix and let $\bm w(x)=(w_0(x),\ \dots,\ w_n(x),\ 1)  \in \Ker P(x)$.
Then, the differential equation
\begin{align}
\Big[
\theta_x^{n+1} + w_n(x) \theta_x^n + \dots + w_1(x) \theta_x + w_0(x)
\Big]\bullet  f(x)^n
=0
\end{align}
is equal to \eqref{ode n}.

Every entry of $P(x)$ is analytic at $x=0$ since $b_0(x)$ and $b_1(x)$ are analytic.
Moreover $w_0(x),\dots,w_n(x)$ are all analytic because each of $p_{ii}(x)$ is a constant and $w_{n+1}(x)=1$.
Thus, $x_0=0$ is a regular singular point for \eqref{ode n} since $\theta_x^k$ is a linear combination of $1,x\p_x,\dots,x^k\p_x^k$.
Furthermore, the indicial equation for \eqref{ode n} is
\begin{align}
\lambda^{n+1} + w_n(0) \lambda^n + \dots + w_1(0) \lambda + w_0(0) = 0.
\end{align}

On the other hand, since
\begin{align}
\tilde B(x,\theta_x)^k \bullet \bm q_0 \Big|_{x=0}
&= \tilde B(x,\theta_x)^{k-1} \bullet B(0) \bm q_0 \Big|_{x=0}
= \cdots =  \tilde B(x,\theta_x) \bullet B(0)^{k-1} \bm q_0 \Big|_{x=0}
=  B(0)^k \bm q_0,
\end{align}
we have
\begin{align}
P(0)
= (\bm q_0,\ B(0)\bm q_0,\ \dots,\ B(0)^{n+1}\bm q_0).
\end{align}
Hence, by $P(0)\bm w(0)=\bm 0$, we obtain
\begin{align}
\Big( B(0)^{n+1} + w_n(0) B(0)^n + \dots + w_1(0) B(0) + w_0(0)I \Big) \bm q_0 = \bm 0.
\end{align}
Then by the Cayley--Hamilton theorem and by the uniqueness of $\bm w(0)$, the characteristic equation of the matrix $B(0)$ is equal to the indicial equation for \eqref{ode n}.

On the other hand, by Vieta's formula, we have $b_0(0) = -\lambda_1\lambda_2$ and $b_1(0) = \lambda_1 + \lambda_2$.
Hence the matrix $B(0)$ is equal to the matrix $M$ in Lemma \ref{lem:eig}.
Thus the exponents for \eqref{ode n} are proved to be $(n-k)\lambda_1+k\lambda_2$ $(k=0,1,\dots,n)$. 
\end{proof}


We have described the exponents for the differential equation satisfied by $f(x)^n$.
From now on, we investigate the exponents for the Fourier transformed equation.

Consider a differential equation
\begin{align}
L\bullet f(x) = 0,\qquad
L = p_r(x) \p_x^r +p_{r-1}(x) \p_x^{r-1} + \dots + p_1(x)\p_x + p_0(x),
\end{align}
where $p_0(x),\dots,p_r(x)$ are coprime polynomials.
Let $d$ be the degree of $p_r(x)$.
We assume that $\deg p_k(x) \leq d$ $(k=0,1,\dots,r-1)$.

By the definition of a regular singular point, if $x=0$ is a regular singular point, then $d\leq r$.
Similarly, if $x=\infty$ is a regular singular point, then $d\geq r$.

If $x=0$ is a regular singular point, the main terms of the differential equation in the neighborhood of $x=0$ are $x^d\p_x^r,\ x^{d-1}\p_x^{r-1},\ \dots,\ \p_x^{r-d}$.
Because of the relation $x^a\p_x^b = x^{a-b}\theta_x(\theta_x-1)\cdots(\theta_x-b+1)$,
the indicial equation has $0,\ 1,\ \dots,\ r-d-1$ as its roots.

The regular singular point and its exponents are transformed by the Fourier transform as follows.

\begin{proposition}
Suppose that $p_r(x)=x^d$, $\deg p_k(x) \leq d$ $(k=0,1,\dots,r-1)$, $x=0$ is a regular singular point for the equation $L\bullet f(x)=0$ and its exponents are $\mu_1,\dots,\mu_d$ and $0,\ 1,\ \dots,\ r-d-1$. Then
$x=\infty$ is a regular singular point for the Fourier transformed equation $\hat L \bullet \hat f(x) = 0$ and its exponents are $-\mu_1-1,\ -\mu_2-1,\ \dots,\ -\mu_d-1$.

Suppose that $\deg p_k(x) \leq \deg p_r(x)$ $(k=0,1,\dots,r-1)$, $x=\infty$ is a regular singular point for the equation $L\bullet f(x)=0$ and its exponents are $\mu_1,\dots,\mu_r$.
Then $x=0$ is a regular singular point for the transformed equation and its exponents are $-\mu_1-1,\ -\mu_2-1,\ \dots,\ -\mu_r-1$ and $0,\ 1,\ \dots,\ d-r-1$.
\end{proposition}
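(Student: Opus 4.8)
The plan is to obtain the indicial polynomials of $\hat L$ by letting $\hat L$ act formally on a power $x^\nu$ and isolating the coefficient of the extremal power of $x$. Write $L=\sum_{k=0}^{r}p_k(x)\p_x^k$ with $p_k(x)=\sum_j c_{k,j}x^j$. Since $\mathcal F$ is a ring homomorphism, $\hat L=\sum_k i^k p_k(i\p_x)x^k$, and because $\p_x^j\bullet x^\nu=[\nu]_j\,x^{\nu-j}$ one gets
\[
L\bullet x^\lambda=\sum_{k,j}c_{k,j}\,[\lambda]_k\,x^{j+\lambda-k},
\qquad
\hat L\bullet x^\nu=\sum_{k,j}c_{k,j}\,i^{k+j}\,[k+\nu]_j\,x^{k+\nu-j}.
\]
For an operator with a regular singular point at $x_0$, the indicial polynomial is — up to a nonzero scalar — the coefficient (a polynomial in the exponent variable) of the lowest power of $x$ when $x_0=0$, or of the highest power of $x$ when $x_0=\infty$ in this paper's convention, occurring in such an expansion, and the exponents are its roots; this is just the recipe ``express in $x,\theta_x$ and substitute $x=x_0$'' rephrased. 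So the proof reduces to locating the extremal power of $x$, comparing the two coefficient polynomials, and checking that $\hat L$ is again regular singular at the new point.

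To locate the extremal power: the hypothesis that $x=0$ (Part 1), resp.\ $x=\infty$ (Part 2), is regular singular for $L\bullet f=0$ translates, via the normalized form $[\theta_x]_r+c_{r-1}(x)[\theta_x]_{r-1}+\dots+c_0(x)$ with the $c_k$ analytic there, into $\mathrm{ord}_0 p_k\ge\max\{0,k-r+d\}$, resp.\ $\deg p_k\le k-r+d$, for all $k$, where $d=\deg p_r$. In Part 1 this forces $j-k\ge d-r$ whenever $c_{k,j}\ne0$, so the lowest power in $L\bullet x^\lambda$ is $x^{\lambda+d-r}$, with coefficient $\chi(\lambda)=\sum_k c_{k,k-r+d}[\lambda]_k$, which by hypothesis equals $[\lambda]_{r-d}\prod_{i=1}^d(\lambda-\mu_i)$; and the highest power in $\hat L\bullet x^\nu$ is $x^{\nu+r-d}$, with coefficient $\sum_k c_{k,k-r+d}\,i^{2k-r+d}\,[k+\nu]_{k-r+d}$ — a sum over the very same index set. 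Part 2 is mirror-symmetric under $0\leftrightarrow\infty$ and $d-r\leftrightarrow r-d$: now $x^{\lambda+d-r}$ is the highest power in $L\bullet x^\lambda$ (coefficient $c_{r,d}\prod_{i=1}^r(\lambda-\mu_i)$) and $x^{\nu+r-d}$ the lowest in $\hat L\bullet x^\nu$.

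Comparing the coefficient polynomials rests on the elementary identity $(-1)^k[k+\nu]_k=[-\nu-1]_k$ — the combinatorial shadow of $\mathcal F(\theta_x)=-\theta_x-1$, equivalently of $\mathcal F[x^\lambda]\propto x^{-\lambda-1}$, which already makes the exponent map $\mu\mapsto-\mu-1$ plausible. Combining it with the falling-factorial split $[k+\nu]_{k-r+d}=[k+\nu]_k/[\nu+r-d]_{r-d}$ (in Part 2 the analogous split $[k+\nu]_{k+d-r}=[k+\nu]_k\cdot[\nu]_{d-r}$ is a product) and with the factored form of $\chi$, a few lines collapse the extremal-power coefficient of $\hat L\bullet x^\nu$ to a nonzero scalar times $\prod_{i=1}^d(\nu+1+\mu_i)$ in Part 1, and times $[\nu]_{d-r}\prod_{i=1}^r(\nu+1+\mu_i)$ in Part 2. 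The extra factor $[\nu]_{d-r}$ is there because in Part 2 $\hat L$ has order $d$ but leading coefficient of degree $r\le d$, and it supplies exactly the exponents $0,1,\dots,d-r-1$ (vacuous when $d=r$); reading off roots gives the asserted exponents $-\mu_i-1$.

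It remains — and this is where I expect the real work to lie — to verify that $\hat L$ genuinely has a regular singular point at the new point, i.e.\ that its normalized coefficients have no pole there. The assumption $p_r(x)=x^d$ in Part 1 is essential for this: it says $\mathrm{ord}_0 p_r=\deg p_r=d$, which together with the bound above forces the coefficient of $\p_x^k$ in $\hat L$ to have degree $\le k+r-d$, precisely the condition for a regular singular point of $\hat L$ at $\infty$; without it the transformed coefficients can have a pole at $\infty$. In Part 2 the analogue — that the top coefficient of $\hat L$ is the monomial $c_{r,d}i^{r+d}x^r$ — comes for free from $\deg p_k\le k-r+d$. So the chief obstacles are this bookkeeping of differential orders and orders of vanishing, together with keeping the several sign and normalization conventions straight (in particular the convention for exponents at $\infty$); once the correct power of $x$ is isolated, the algebraic heart is the single identity $(-1)^k[k+\nu]_k=[-\nu-1]_k$.
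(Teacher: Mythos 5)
Your proposal is correct and takes essentially the same route as the paper: both isolate the extremal (``main'') term at the singular point, use that the Fourier transform swaps the weights of $x$ and $\p_x$ (hence the points $0$ and $\infty$) while preserving that term, and implement $\theta_x\mapsto-\theta_x-1$ — your identity $(-1)^k[k+\nu]_k=[-\nu-1]_k$ together with the falling-factorial splits is exactly the paper's operator identities $\p_x^k x^k=(\theta_x+1)\cdots(\theta_x+k)$ and $x^k\p_x^k=\theta_x(\theta_x-1)\cdots(\theta_x-k+1)$ read off on $x^\nu$. Your coefficient-level bookkeeping for why regular singularity is preserved is, if anything, more explicit than the paper's brief justification.
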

\begin{proof}
By the assumption there exists a term with the highest degree both in $x$ and $\p_x$.
By the Fourier transform $x\mapsto i\p_x,\ \p_x \mapsto ix$, the highest degree is not changed and 
their weights are reversed.
Hence if $x=0\ (\infty)$ is a regular singular point for $L\bullet f=0$, then $x=\infty\ (0)$ is a regular singular point for $\hat L\bullet \hat f=0$, and the main terms in the neighborhood of the singular point are not changed by the Fourier transform.

If $x=0$ is a regular singular point for $L\bullet f(x)=0$, its main term equals $x^{-(r-d)}\theta_x(\theta_x-1)\cdots(\theta_x-(r-d)+1)(\theta_x-\mu_1)\cdots(\theta_x-\mu_d)$.
The main term is Fourier transformed to
\begin{align}
&\quad \p_x^{-(r-d)}(\theta_x+1)(\theta_x+2)\cdots(\theta_x+(r-d))(\theta_x+\mu_1+1)\cdots(\theta_x+\mu_d+1)\\
&= x^{r-d}(\theta_x+\mu_1+1)\cdots(\theta_x+\mu_d+1),
\end{align}
by the formula $\p_x^k x^k = (\theta_x+1)(\theta_x+2)\cdots(\theta_x+k)$.
This gives the exponents for the Fourier transformed equation at $x=\infty$.

If $x=\infty$ is a regular singular point for $L\bullet f(x)=0$,
its main term equals $x^{d-r}(\theta_x-\mu_1)\cdots(\theta_x-\mu_r)$.
The main term is Fourier transformed to
\begin{align}
&\quad \p_x^{d-r}(\theta_x+\mu_1+1)\cdots(\theta_x+\mu_r+1)\\
&= x^{-(d-r)}\theta_x(\theta_x-1)\cdots(\theta_x-(d-r)+1)(\theta_x+\mu_1+1)\cdots(\theta_x+\mu_r+1),
\end{align}
by the formula $x^k\p_x^k = \theta_x(\theta_x-1)\cdots(\theta_x-k+1)$.
This gives the exponents for the Fourier transformed equation at $x=0$.
\end{proof}

\section{Applications to statistics}
\label{sec:applications}

\subsection{Sum of beta random variables}
\label{subsec:beta}

Let $f_n(a,b;x)$ be the probability density function of sum of $n$ beta random variables $\mathrm{Beta}(a,b)$.
The moment generating function of the beta random variable is
\begin{align}
M(a,b;t)
= \frac{\Gamma(a+b)}{\Gamma(a)\Gamma(b)} \int_0^1 e^{tu} u^{a-1}(1-u)^{b-1}du.
\end{align}
Since $M(a,b;t)$ equals the confluent hypergeometric function ${}_1F_1(a,a+b;t)$ (c.f. \cite{NIST_book_10}), the characteristic function $\phi(a,b;t) = M(a,b;it)$ satisfies the following second order differential equation:
\begin{align}
\bigg[ \p_t^2 - \Big(i-\frac{a+b}{t} \Big)\p_t - \frac{ia}{t} \bigg]
\bullet \phi(a,b;t) = 0,\qquad
(i=\sqrt{-1}).
\label{ode beta}
\end{align}

An $(n+1)$-th order differential equation satisfied by $M(a,b;t)^n$ is derived by Theorem \ref{thm:ode}, and by the Fourier transform, we obtain a holonomic differential equation satisfied by $f_n(a,b;x)$.
By Corollary \ref{cor:con}, putting $m_0=M_0=-1,\ m_1=-1,\ M_1=0$, the equation for $f_n(x)$ is at most of the $n$-th order.
In fact, the equation derived by the procedure of Section 2 is exactly of the $n$-th order.

We define the initial term of a formal power series, before the proposition on the order.
\begin{definition}
For a formal power series $f(x)= \sum_{k=0}^\infty c_n x^{\lambda + k}$ $(c_0\neq 0)$, we define 
\begin{align}
\ini f(x) = c_0 x^\lambda.
\end{align}
We denote the matrix (vector) whose $(i,j)$ entry is $\ini f_{ij}(x)$ by $\ini F(x)$, where $F(x)=(f_{ij}(x))$.
\end{definition}

\begin{proposition}
\label{prop:beta}
The differential equation for $f_n(a,b;x)$ derived by the procedure of Section 2 is of the $n$-th order.
\end{proposition}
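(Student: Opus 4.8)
The plan is to track the behavior of the constructed differential equation near $x=0$, where $f_n(a,b;x)$ has a power-series expansion with leading exponent $n(a-1)$ (each $\mathrm{Beta}(a,b)$ density behaves like $x^{a-1}$ near $0$, and the $n$-fold convolution like $x^{na-1}$, but we work with $M(a,b;t)^n$ before Fourier transform, where $M(a,b;t)^n = 1 + O(t)$, i.e.\ the relevant exponent is $0$). Concretely, I would first apply Theorem \ref{thm:ode} to the second-order equation \eqref{ode beta} for $\phi(a,b;t)$, producing the $(n+1)$-th order equation \eqref{ode n} with polynomial coefficients $v_0(t),\dots,v_{n+1}(t)$, coprime. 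By Theorem \ref{thm:deg} with $m_0=M_0=-1$, $m_1=-1$, $M_1=0$, the bound \eqref{maxdeg} gives $\max_k\deg v_k(t)\le n$; by Corollary \ref{cor:con} the Fourier-transformed equation for $f_n(a,b;x)$ then has order at most $n$. The substance of the proposition is that this order is \emph{exactly} $n$, i.e.\ the bound in \eqref{maxdeg} is attained, equivalently $\max_k\deg v_k(t)=n$ and moreover this maximal degree is attained by $v_{n+1}(t)$ (so that the leading coefficient $\hat p_n$ of the Fourier transform does not vanish).

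Next I would argue that the degree bound is sharp by examining the lowest-order terms of the $v_i$'s, i.e.\ by looking at the point $t=\infty$ — since under the Fourier correspondence the degree in $t$ becomes the order in $x$, it suffices to show that the coefficient $v_{n+1}(t)$ (the coefficient of $\partial_t^{n+1}$) genuinely has degree $n$, or equivalently that the column-reduction producing $\bm v(t)$ forces $\mathrm{maxdeg}\,v_{n+1}=n$ while no cancellation of a common polynomial factor of degree $\ge 1$ occurs. For \eqref{ode beta} the rational functions are $a_1(t)=i-(a+b)/t$ and $a_0(t)=ia/t$, so $m_1=\mindeg a_1=-1$ and $M_1=\maxdeg a_1=0$; the inequality chain in the proof of Theorem \ref{thm:deg}, applied with the roles of $x$ and $x^{-1}$ exchanged, shows $\maxdeg\tilde v_k(t)$ is controlled by $\max\{M_0+(n-1)M_1,\,nM_1,\,0\}=0$, which after clearing denominators (the $\tilde v_k$ have a pole of order at most $n$ at $t=0$, coming from $\min_k\mindeg\tilde v_k\ge \min\{m_0,m_1\}+(n-1)m_1 = -1-(n-1)= -n$) yields polynomials of degree at most $n$, with equality for $v_{n+1}$ because $\mindeg\tilde v_{n+1}=\mindeg 1 = 0$ exactly, not $\ge 1$. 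I would then verify, using the $\mathrm{ini}$ (initial-term) machinery just introduced, that the leading initial terms of the columns of $Q(t)$ near $t=0$ are governed by the matrix $B(0)$ of Theorem \ref{thm:exponents}, whose eigenvalues here are $(n-k)\lambda_1+k\lambda_2$ with $\lambda_1=0$, $\lambda_2=1$ (the exponents of \eqref{ode beta} at $t=0$, obtained from $b_0(0)=0$, $b_1(0)=1$), namely $0,1,\dots,n$; this pins down the exact pole order $n$ of $\tilde v_0(t)$ at $t=0$, so that after clearing denominators $v_{n+1}(t)$ acquires a factor $t^n$ and hence has degree exactly $n$ unless a nontrivial common factor is removed — which I would rule out by a direct initial-term computation showing the $v_i$ are already coprime at $t=0$.

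The main obstacle I anticipate is the coprimality/non-cancellation step: it is conceivable a priori that $v_{n+1}(t)$ and all other $v_i(t)$ share a common polynomial factor that, once removed, drops the degree below $n$; ruling this out requires showing that there is no point $t_1$ (in particular $t_1=0$ and $t_1=\infty$) at which all $v_i$ vanish to positive order beyond what the generic construction forces. The cleanest route is to use the explicit structure of $Q(t)$: its diagonal entries are the nonzero constants $[n]_i$ (Lemma \ref{lem:tri}), so the kernel vector, chosen with $v_{n+1}=1$, has entries whose denominators are exactly powers of $t$ (no other poles can appear, since $a_0,a_1$ have poles only at $t=0$), and the initial-term analysis at $t=0$ — via the matrix $B(0)=M$ of Lemma \ref{lem:eig} with $(\lambda_1,\lambda_2)=(0,1)$ — shows $\tilde v_0$ has a pole of order exactly $n$ there; after multiplying through by $t^n$ the entry $v_{n+1}(t)=t^n$ has degree exactly $n$, and clearing $t^n$ again would reintroduce the pole, so no such cancellation is possible. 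Assembling these observations gives $\max_k\deg v_k(t)=n$ with the maximum attained at the top coefficient, and therefore, by the Fourier correspondence \eqref{eq:fourier-corespondence}, the equation for $f_n(a,b;x)$ is exactly of order $n$.
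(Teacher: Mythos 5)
Your reduction of the problem is the right one, and it matches the paper's frame: since $\max_k\maxdeg\tilde v_k=0$ (attained by $\tilde v_{n+1}=1$) and the only possible denominators are powers of $t$, the order of the Fourier-transformed equation equals the largest pole order at $t=0$ among the kernel entries $\tilde v_k(t)$, and once some cleared coefficient has nonzero constant term no common factor can lower the degree. The gap is in the one step that actually carries the content: you propose to certify that this pole order is \emph{exactly} $n$ from the exponent/eigenvalue data of $B(0)$ (Lemma \ref{lem:eig}, Theorem \ref{thm:exponents}). First, your exponent values are wrong: for \eqref{ode beta} one has $b_1(0)=1-(a+b)$ and $b_0(0)=0$, so the exponents at $t=0$ are $0$ and $1-(a+b)$ (as the paper notes), not $0$ and $1$; the eigenvalues of $B(0)$ are $k\bigl(1-(a+b)\bigr)$, $k=0,\dots,n$, not $0,1,\dots,n$. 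Second, and more fundamentally, indicial data at $t=0$ cannot ``pin down'' the exact pole order of $\tilde v_0$. Writing the $(n+1)$-th order equation in $\theta_t$-form $\theta_t^{n+1}+w_n(t)\theta_t^n+\dots+w_0(t)$ with $w_k$ analytic, one has $\tilde v_0(t)=t^{-(n+1)}w_0(t)$, and $B(0)$ only determines the values $w_k(0)$. Since $\lambda=0$ is always an exponent, $w_0(0)=0$ automatically; whether $\tilde v_0$ has a pole of order $n$ or smaller is governed by $w_0'(0)$, a sub-leading coefficient that the exponents do not see. So your argument, as stated, only reproduces the upper bound ``pole order $\le n$'' already contained in Theorem \ref{thm:deg}.

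This is exactly why the paper does not argue through exponents here: its proof runs an explicit induction on the recursion \eqref{eq:qij}, computing the initial terms $\ini q_{0,j}(t)=i(-1)^j n\frac{a}{b}(a+b)_{j-1}t^{-(j-1)}$ and $\ini q_{1,j}(t)=(-1)^{j-1}n(a+b)_{j-1}t^{-(j-1)}$, whose leading coefficients are nonzero because $a,b>0$; this gives $\mindeg q_{0,n+1}(t)=-n$ exactly and hence degree exactly $n$ after clearing denominators. Note also that a corrected exponent-only argument cannot be patched in general: one could try $\tilde v_1$ instead of $\tilde v_0$ (its pole order is exactly $n$ precisely when $1$ is not an exponent of the power equation), but for beta parameters with $a+b=1-1/k$, $2\le k\le n$, the value $1=k\bigl(1-(a+b)\bigr)$ \emph{is} an exponent, so the parameter-dependent leading-coefficient computation of the kind the paper performs is unavoidable. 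To repair your proof you would need to supply that computation (or an equivalent nonvanishing statement such as $w_0'(0)\neq0$) rather than appeal to Lemma \ref{lem:eig}.
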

\begin{proof}
We prove
\begin{align}
\ini q_{0,j}(t) = i(-1)^j   n\frac{a}{b}(a+b)_{j-1} t^{-(j-1)},\quad
\ini q_{1,j}(t) = (-1)^{j-1} n (a+b)_{j-1}t^{-(j-1)},
\end{align}
for $j=2,3,\dots,n+1$ by induction based on \eqref{eq:qij}.
It is easy to check them for $j=2$.
Assuming them up to $j$, we have
\begin{align}
&\ini q_{0,j+1}(t)
= \p_t q_{0,j}(t) + a_0(t) \ini q_{1,j}(t)
= i(-1)^{j+1} n \frac{a}{b} (a+b)_{j}t^{-j},\\
&\ini q_{1,j+1}(t)
= (\p_t - bt^{-1}) \ini q_{1,j}(t)
= (-1)^{j} n (a+b)_j t^{-j},
\end{align}
since $\mindeg q_{2,j} \geq -(j-2)$ by \eqref{degin}.
Thus, the results are shown by induction.

Hence, we obtain $\mindeg q_{0,n+1}(t) = -n$.
As in the proof of Theorem \ref{thm:deg}, by clearing the denominators, 
we see that the highest degree of $t$ of the equation \eqref{ode n} for $\phi(a,b;t)^n$ is $n$.
\end{proof}

\begin{example}
\label{ex:f_3}
$f_3(a,b;x)$ satisfies the differential equation
\begin{align}
&\Big[
x(x-1)(x-2)(x-3)\p_x^3 + \big(-6(a+b-2)x^3 + 2(16a + 11b - 27)x^2 -6(8a + 3b - 11)x\\
& + 18(a-1)\big)\p_x^2 + \big((a+b-2)(11(a+b)-18)x^2 - (48a^2 + 66ab + 18b^2 - 145a - 95b\\
& + 108)x + 3(a - 1)(15a + 12b - 22)\big)\p_x - (a+b-2)(2(a+b)-3)(3(a+b)-4)x\\
& + 3(a-1)(2(a+b)-3)(3(a+b)-4)
\Big] \bullet f_3(a,b;x) = 0.
\end{align}
\end{example}

Note that $x=0$ is a regular singular point for \eqref{ode beta}, and its exponents are $0,1-(a+b)$ since
\begin{align}
\p_t^2 - \Big(i-\frac{a+b}{t} \Big)\p_t - \frac{ia}{t}
= t^{-2} \theta_t(\theta_t -1+a+b) -it^{-1}(\theta_t + a).
\end{align}
Hence we can obtain the exponents for the equation satisfied by $f_n(a,b;x)$ relative to regular singular point $\infty$.
However, it is not informative since $f_n(a,b;x)$ has a compact support.

On the other hand, the equation in Example \ref{ex:f_3} has regular singular points at $x=0,1,2,3$.
In general, the degree of the coefficient polynomial of the highest order term $\p_x^n$ is less than or equal to $n+1$.
On the other hand, a differential equation satisfied by $f_n$, or the $n$-th convolution of $f_1$, has to have singular points at $x=0,1,\dots,n$, because $f_1$ has singular points at $x=0,1$.
Therefore, the highest order term of the differential equation derived as mentioned above is $x(x-1)\cdots(x-n)\p_x^n$,
and this implies that  $x=0,1,\dots,n$ are all regular singular points.

Especially, in the case of $a=b=1$ (then, the beta distribution becomes the uniform distribution), the differential equation is simply $x(x-1)\cdots(x-n) \p_x^n \bullet f_n(1,1;x)=0$.
This is because $\phi(1,1;t) = (e^{it}-1)/(it)$ and thus
\begin{align}
&\quad 2\pi \mathcal F^{-1}[x(x-1)\cdots(x-n) \p_x^n \bullet f_n(1,1;x)]\\
&= -i\p_t(-i\p_t-1)\cdots(-i\p_t-n) \bullet (-1)^n (e^{it}-1)^n\\
&= -i\p_t(-i\p_t-1)\cdots(-i\p_t-(n-1)) \bullet (-1)^n n(e^{it}-1)^{n-1}\\
&=\cdots =  -i\p_t \bullet (-1)^n n! = 0.
\end{align}
This shows that $f_n(1,1;x)$ has to be a piece-wise $(n-1)$-th degree polynomial.

The exact form of $f_n(1,1;x)$ is given in Section 1.9 of \cite{feller_vol2_book}.
For $n=2$, $f_2(1,1;x)$ is a continuous piece-wise linear function and $f_2\in C^0$.
By induction it follows that $f_n(1,1;x)=\int_0^1 f_{n-1}(1,1;x-y)dy$ belongs to $C^{n-2}$.
Hence we can put $f_n(1,1;x) = c_0x^{n-1}\ (0\leq x \leq1)$ by the smoothness at $x=0$.
We can also put $f_n(1,1;x) = c_0x^{n-1} + c_1(x-1)^{n-1}\ (1\leq x \leq2)$ by the smoothness at $x=1$.
In the same way, we can put $f_n(1,1;x)= \sum_{j=0}^{k} c_j (x-j)^{n-1}\ (k\leq x \leq k+1)$.
By the smoothness at $x=n$, the $k$-th $(k=0,1,\dots,n-2)$ derivative of $g_n(x) = \sum_{j=0}^{n-1} c_j (x-j)^{n-1}$ at $x=n$ is zero.
Moreover, we have
\begin{align}
\sum_{j=0}^{n-1} \int_j^n c_j (x-j)^{n-1} dx
= \sum_{j=0}^{n-1} c_j\frac{(n-j)^n}{n}
= 1,
\end{align}
because of $\int_0^n f_n(1,1;x) dx = 1$.
Hence, $c_0,\dots,c_{n-1}$ satisfy the equation
\begin{align}
\bma
n^{n} & (n-1)^{n} &\cdots & 1^{n}\\ 
n^{n-1} & (n-1)^{n-1} &\cdots & 1^{n-1}\\ 
\vdots & \vdots & \vdots & \vdots \\
n^1 & (n-1)^1 &\cdots & 1^1
\ema
\bma
c_0\\
c_1\\
\vdots\\
c_{n-1}
\ema
=
\bma n \\ 0\\ \vdots \\0 \ema.
\end{align}
The matrix on the left is invertible by the Vandermonde determinant.
Therefore, we can determine the probability density function $f_n(1,1;x)$.

\begin{remark}
The $k$-th moment of the beta random variable is $(\alpha)_k/(\alpha+\beta)_k$.
The moments of sum of $n$ beta random variables are the coefficients of
\begin{align}
\bigg(
\frac{(\alpha)_0}{(\alpha+\beta)_0} + \frac{(\alpha)_1}{(\alpha+\beta)_1}t^1 + \frac{(\alpha)_2}{(\alpha+\beta)_2}t^2 + \cdots
\bigg)^n.
\end{align}
The probability density function can be approximated in terms of orthogonal polynomials by fitting the moments.  However the information provided by the differential equation can not be easily derived from the
moments.
\end{remark}

\subsection{Sum of cubes of standard normal random variables}
\label{subsec:cube}
In this section we study characteristic functions and probability density functions of sum
of cubes of standard normal variables.    Concerning the probability distribution of sample skewness from
normal population, Geary (\cite{geary}) and Mulholland (\cite{mulholland}) give very detailed results.
However the distribution of the sum of cubes of standard normal variables, which is a more basic quantity
than the sample skewness, has not been studied in detail.

Let $f_n(x)$ denote  the probability density function of sum of cubes of $n$ standard normal variables.
The characteristic function of the cube of a standard normal variable is
\begin{align}
\phi(t)
= \frac{1}{\sqrt{2\pi}}\int_{-\infty}^\infty e^{-x^2/2} e^{itx^3} \,dx,\qquad
(i=\sqrt{-1}).
\end{align}
Let
\begin{align}
I_j(t)
= \frac{i^j}{\sqrt{2\pi}}\int_{-\infty}^\infty x^j e^{-x^2/2} e^{itx^3}\,dx,
\end{align}
for $j=0,1,2,\dots$.
$I_j(t)$ satisfies the recursion
\begin{align}
3t I_j(t)
&= \frac{i^{j-1}}{\sqrt{2\pi}}\int_{-\infty}^\infty x^{j-2} e^{-x^2/2} \, \Big(e^{itx^3}\Big)' \,dx
= -\frac{i^{j-1}}{\sqrt{2\pi}}\int_{-\infty}^\infty e^{itx^3} \,\Big(x^{j-2}e^{-x^2/2}\Big)' \,dx \\
&= I_{j-1}(t) + (j-2)I_{j-3}(t).
\end{align}
Since $\phi(t)=I_0(t),\ \p_t\phi(t) = -I_3(t),\ \p_t^2\phi(t) = I_6(t)$, we can express 
$\p_t\phi(t), \p_t^2\phi(t)$ in terms of $I_0(t), I_1(t)$.
By eliminating $I_0(t), I_1(t)$, we obtain the second-order differential equation for $\phi(t)$
\begin{align}
\Big[ 27t^3\p_t^2 + (81t^2+1)\p_t + 15t \Big]\bullet \phi(t)=0. \label{phi 1}
\end{align}
This can be also derived by integration algorithm (\cite{oaku1997}).  

A differential equation satisfied by $\phi(t)^n$ is derived by Theorem \ref{thm:ode}, and by Fourier transform, we obtain a differential equation satisfied by $f_n(x)$.
In Corollary \ref{cor:con}, putting $m_0=M_0=-2,\ m_1=-3,\ M_1=-1$, the equation for $f_n(x)$ is at most of the $3n$-th order.
In fact, it is exactly of the $3n$-the order.

\begin{proposition}
The differential equation for $f_n(x)$ derived by the procedure of Section 2 is of the $3n$-th order.
\end{proposition}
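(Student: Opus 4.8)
The plan is to follow the strategy of Proposition~\ref{prop:beta}: extract precise low-order information from the columns of the matrix $Q(t)$ attached to \eqref{phi 1}, and transfer it to the kernel vector. Putting \eqref{phi 1} in the form \eqref{ode 1} gives $a_1(t)=-3t^{-1}-\tfrac1{27}t^{-3}$ and $a_0(t)=-\tfrac59 t^{-2}$, so $m_0=M_0=-2$, $m_1=-3$, $M_1=-1$, and Corollary~\ref{cor:con} already supplies the upper bound $3n$. The $x\mapsto x^{-1}$ form of the estimates in the proof of Theorem~\ref{thm:deg} gives $\maxdeg\tilde v_i(t)<0$ for $0\le i\le n$ while $\tilde v_{n+1}=1$; hence, on clearing denominators in \eqref{ode n} by $t^s$ with $s=-\min_i\mindeg\tilde v_i$, one obtains coprime polynomials whose unique term of top degree $s$ is $t^s\p_t^{n+1}$, and after Fourier transforming this is the only source of $\p_x^{s}$, so the transformed equation has order exactly $s$. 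Since the componentwise bounds $\mindeg\tilde v_0\ge m_0+(n-1)m_1=-3n+1$ and $\mindeg\tilde v_i\ge(n+1-i)m_1=-3(n+1-i)$ $(1\le i\le n)$ from the proof of Theorem~\ref{thm:deg} show that only $\tilde v_1$ can attain $-3n$, everything comes down to proving $\mindeg\tilde v_1(t)=-3n$.

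First I would sharpen the inequalities \eqref{degin} to equalities. By induction on $j$ using \eqref{eq:qij}, $\mindeg q_{0,j}(t)\ge -3j+4$, while for $1\le i\le j$ one has $\mindeg q_{i,j}(t)=-3(j-i)$, and the coefficient $\ell_{i,j}$ of $t^{-3(j-i)}$ in $q_{i,j}$ satisfies $\ell_{i,i}=[n]_i$ and $\ell_{i,j+1}=(n+1-i)\ell_{i-1,j}-\tfrac i{27}\ell_{i,j}$ (with the convention $\ell_{0,j}=0$). The point is that in \eqref{eq:qij} the bottom degree $-3(j+1-i)$ is reached only by $(n+1-i)q_{i-1,j}$ and by the $-\tfrac1{27}t^{-3}$-part of $ia_1(t)q_{i,j}$ — the $\p_t$-term, the $a_0$-term, and (for $i=1$) the $q_{0,j}$-term all stay at strictly higher degree — and a parallel induction gives $\sgn\ell_{i,j}=(-1)^{j-i}$, so the two surviving contributions always carry the same sign and no cancellation occurs; in particular $\ell_{i,j}\ne0$.

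Next, normalising $\tilde v_{n+1}=1$ and using Lemma~\ref{lem:tri} (the first $n+1$ columns of $Q(t)$ form an upper-triangular matrix with non-zero constant diagonal entries $[n]_i$), Cramer's rule for the kernel of $Q(t)$ gives $\tilde v_1(t)=(-1)^n\det\bigl((q_{i,j}(t))_{1\le i\le n,\ 2\le j\le n+1}\bigr)\big/\prod_{i=0}^n[n]_i$, after expanding the minor deleting column $1$ along its column $\bm q_0=\T{(1,0,\dots,0)}$. A degree count then shows that in the Leibniz expansion of this $n\times n$ determinant every non-vanishing term has order exactly $-3n$ in $t$: for a bijection $\tau$ with $\tau(i)\ge i$ for all $i$, $\sum_i\mindeg q_{i,\tau(i)}=-3\bigl(\sum_i\tau(i)-\sum_i i\bigr)=-3n$, independent of $\tau$. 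Hence $\mindeg\tilde v_1=-3n$ precisely when the $t^{-3n}$-coefficient of that determinant — namely $\det L$ for the leading-coefficient matrix $L=(\ell_{i,j})_{1\le i\le n,\,2\le j\le n+1}$, with $\ell_{i,j}:=0$ for $i>j$ — is non-zero.

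The crux, and the step I expect to be the main obstacle (the estimates of Theorem~\ref{thm:deg} alone cannot exclude an accidental cancellation here), is to prove $\det L\ne0$ (the case $n=1$ being trivial, as $L=(-\tfrac n{27})$). The recursion for $\ell_{i,j}$ exhibits $L$ as the Krylov matrix $(\bm c,\ T\bm c,\ \dots,\ T^{\,n-1}\bm c)$, where $\bm c=\T{(-\tfrac n{27},\ n(n-1),\ 0,\dots,0)}$ is its first column and $T$ is the lower bidiagonal operator with diagonal $\bigl(-\tfrac1{27},-\tfrac2{27},\dots,-\tfrac n{27}\bigr)$ and non-zero subdiagonal $(n-1,n-2,\dots,1)$. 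Since the diagonal of $T$ is strictly decreasing, $T$ has $n$ distinct eigenvalues, so $\det L\ne0$ is equivalent to $\bm c$ being a cyclic vector for $T$, i.e.\ to $\bm c$ pairing non-trivially with every left eigenvector of $T$. The left eigenvector for the eigenvalue $-\tfrac{k+1}{27}$ is supported on coordinates $0,1,\dots,k$ with first two coordinates proportional to $\bigl(1,\,-\tfrac{k}{27(n-1)}\bigr)$, and its pairing with $\bm c$ works out to a non-zero multiple of $-\tfrac{n(k+1)}{27}$; hence $\bm c$ is cyclic, $\det L\ne0$, $\mindeg\tilde v_1=-3n$, and the equation for $f_n(x)$ produced by the procedure of Section~2 has order exactly $3n$. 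Everything here besides the eigenvector computation is bookkeeping with \eqref{eq:qij}.
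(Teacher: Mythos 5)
Your proof is correct, and its decisive step is genuinely different from the paper's. The paper argues exactly as in Proposition~\ref{prop:beta}: it inducts on \eqref{eq:qij} to obtain the exact initial terms $\ini q_{0,j}(t)=(-1)^{j-1}5n(3t)^{-3(j-1)+1}$ and $\ini q_{1,j}(t)=(-1)^{j-1}n(3t)^{-3(j-1)}$ and then reads off that the cleared-denominator equation \eqref{ode n} has top degree $3n$ (its line ``$\mindeg q_{0,n+1}(t)=-3n$'' is a slip: by its own formula that mindeg is $-3n+1$, and the entry of bottom degree $-3n$ is $q_{1,n+1}$). Passing from the entries of $Q(t)$ to the kernel vector still requires that no cancellation occurs in the back-substitution producing $\tilde v_1$, the only entry that can reach degree $-3n$; the paper leaves this implicit. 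You isolate precisely that point and settle it: you sharpen \eqref{degin} to equalities $\mindeg q_{i,j}=-3(j-i)$ with signed leading coefficients, write $\tilde v_1$ via Cramer's rule and Lemma~\ref{lem:tri} as a maximal minor over $\prod_{i=0}^n[n]_i$, note that every surviving Leibniz term sits exactly at degree $-3n$, and reduce the claim to the nonvanishing of the Krylov determinant $\det(\bm c,\,T\bm c,\dots,T^{n-1}\bm c)$, proved by the left-eigenvector pairing $-n(k+1)/27\neq 0$ — I checked $T$, $\bm c$, the eigenvector support and this pairing, and the case $n=2$ confirms the whole chain (where partial cancellation at order $t^{-3n}$ really does occur in row $1$, so your extra care is not vacuous). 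The trade-off: the paper's route is short and gives explicit leading constants but glosses over the cancellation issue; yours is longer but airtight, and the cyclic-vector criterion would transfer to other examples of this kind.
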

\begin{proof}
As in the proof of Proposition \ref{prop:beta}, we can show
\begin{align}
\ini q_{0,j}(t) = (-1)^{j-1} 5n(3t)^{-3(j-1)+1},\quad
\ini q_{1,j}(t) = (-1)^{j-1}  n(3t)^{-3(j-1)},
\end{align}
for $j=2,3,\dots,n+1$.
Hence, we obtain $\mindeg q_{0,n+1}(t) = -3n$,
and thus the highest degree of $t$ of the equation \eqref{ode n} is $3n$.
\end{proof}

\begin{example}
$f_4(x)$ satisfies the differential equation
\begin{align}
&\Big[
177147 x^5\p_x^{12} +5314410 x^4\p_x^{11} +52455195 x^3\p_x^{10} +(65610 x^4+202242825 x^2)\p_x^9\\
&+ (1180980 x^3+278372295 x)\p_x^8+ (6145470 x^2+89579520)\p_x^7+ (8505 x^3+9950850 x)\p_x^6\\
&+ (76545 x^2+3408480)\p_x^5+155655 x\p_x^4 +(450 x^2+56160)\p_x^3 +1350 x\p_x^2 +480 \p_x+8 x
\Big]\\
& \bullet f_4(x)
=0.
\end{align}
\end{example}

Note that $x=\infty$ is a regular singular point of the differential equation \eqref{phi 1}, and its exponents are $-1/3, -5/3$
since
\begin{align}
27x^3\p_x^2 + (81x^2+1)\p_x + 15x
= 3x(3\theta_x+1)(3\theta_x+5) + x^{-1}\theta_x.
\end{align}
This implies  that there exists a differential equation satisfied by $\phi(t)^n$ which is regular at $x=\infty$ and its exponents are $-n/3,\ -(n+4)/3,\ \dots,\ -(5n-4)/3,\ -5n/3$.
Moreover, there exists a differential equation satisfied by $f_n(x)$ which is regular at $x=0$ and its exponents are $n/3-1,\ (n+4)/3-1,\ \dots,\ (5n-4)/3-1,\ 5n/3-1$ and $0,\ 1,\ \dots,\ 2n-2$.

We now briefly discuss issues in numerical evaluation of $f_n$ based on our differential equation
and computation of initial values.
For numerically solving the differential equation satisfied by $f_n$, an initial value of $(f_n,\ \p_x \bullet f_n, \dots, \p_x^{3n-1} \bullet f_n)$ at $x = x_0 \neq 0$ is needed.  Note that we can not use  $x=0$ 
as the initial point, because it is the singular point of the differential equation.

By dividing the interval of integration of the inversion formula for the characteristic function 
and integrating by parts repeatedly, for any integer $m$ $(\geq 0)$, we obtain
\[
f_n(x)
= \frac{1}{\pi} \Re \bigg[ \int_{0}^T  \phi_n(t) e^{itx} \,dt 
+ e^{iTx}  \sum_{j=1}^{m} \phi_n^{(j-1)}(T) \bigg(\frac{i}{x}\bigg)^j 
+ \bigg(\frac{i}{x}\bigg)^m \int_T^\infty \phi_n^{(m)}(t) e^{itx}\,dt \bigg],
\]
where $\phi_n(t) = \phi(t)^n$.  Integration by parts is needed for numerical evaluation of derivatives of $f_n$.

The formal $k$-th derivative of $f_n(x)$ at $x=x_0$ is
\begin{align}
f_n^{(k)}(x_0)
&= \frac{1}{\pi} \Re \bigg[ \int_{0}^T  (it)^k \phi_n(t) e^{ix_0t} \,dt
+ e^{iTx_0} \sum_{j=1}^{m} \phi_n^{(j-1)}(T) \sum_{l=0}^k \binom{k}{l} \frac{(j+l-1)!}{(j-1)!} \frac{i^{k+j+l}}{x_0^{j+l}}T^{k-l}\notag\\
&\qquad + \sum_{l=0}^k \binom{k}{l} \frac{(m+l-1)!}{(m-1)!} \frac{i^{m+k+l}}{x_0^{m+l}} \int_T^\infty t^{k-l} \phi_n^{(m)}(t) e^{ix_0t}\,dt
 \bigg]. \label{k-th}
\end{align}

Define
\begin{align}
n!m = \prod_{0\leq k < n/m} (n-km),\ (n>0),\qquad
n!m = 1,\ (n\leq 0).
\end{align}
\begin{lemma}\label{lem:exp}
The following expansion of $\phi(t)$ holds at any $t\ (\neq0)$:
\begin{align*}
\phi(t)
&= \frac{\sqrt{2\pi}}{3\Gamma(2/3)}|t|^{-1/3}
\sum_{k=0}^\infty \frac{(6k-5)!6}{(6k)!6\,(6k-4)!6}(3t)^{-2k}
 -\frac{\sqrt{2\pi}}{9\Gamma(1/3)}|t|^{-5/3}
\sum_{k=0}^\infty \frac{(6k-1)!6}{(6k)!6\,(6k+4)!6} (3t)^{-2k}.
\end{align*}
\end{lemma}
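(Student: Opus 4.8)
The plan is to realize $\phi$ as the unique linear combination of the two Frobenius solutions of \eqref{phi 1} at the regular singular point $t=\infty$ whose leading terms match those of the integral representation of $\phi$, and then to solve the Frobenius recurrences in closed form. Since the cube of a standard normal variable is symmetric, $\phi$ is real and even, and the asserted right-hand side is manifestly even in $t$ (every $(3t)^{-2k}$ is); so it suffices to prove the identity for $t>0$. On $(0,\infty)$ the equation \eqref{phi 1} is nonsingular, its solution space is two–dimensional, and its only other singular point is $t=0$; hence $\phi=c_{+}\phi_{+}+c_{-}\phi_{-}$ there, where $\phi_{\pm}$ are the Frobenius solutions at $t=\infty$ belonging to the exponents $-1/3$, $-5/3$ recorded just after \eqref{phi 1}, normalized to leading coefficient $1$.

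First I would pin down $\phi_{\pm}$. Writing $\phi_{\pm}(t)=\sum_{k\ge 0}c_{k}^{\pm}t^{-\lambda_{\pm}-2k}$ and substituting into $\bigl[\,3t(3\theta_t+1)(3\theta_t+5)+t^{-1}\theta_t\,\bigr]\bullet\phi_{\pm}=0$ (the $t=\infty$ form of \eqref{phi 1} from the factorization in the text), using $\theta_t\bullet t^{-\mu}=-\mu t^{-\mu}$ and matching the coefficient of $t^{\,1-\lambda_{\pm}-2m}$, yields the indicial equation $(1-3\lambda)(5-3\lambda)=0$ (so $\lambda_{\pm}\in\{1/3,5/3\}$, confirming the exponents) and the two–term recurrences
\[
c_{m}^{+}=\frac{6m-5}{9\cdot 6m\,(6m-4)}\,c_{m-1}^{+},\qquad
c_{m}^{-}=\frac{6m-1}{9\cdot 6m\,(6m+4)}\,c_{m-1}^{-}.
\]
Iterating from $c_{0}^{\pm}=1$ and rewriting the telescoping products with the $!6$-notation (using $(6k)!6=6^{k}k!$, $(6k-5)!6=\prod_{j=1}^{k}(6j-5)$, $(6k-4)!6=\prod_{j=1}^{k}(6j-4)$, $(6k-1)!6=\prod_{j=1}^{k}(6j-1)$, $(6k+4)!6=4\prod_{j=1}^{k}(6j+4)$) gives
\[
c_{k}^{+}\,t^{-2k}=\frac{(6k-5)!6}{(6k)!6\,(6k-4)!6}\,(3t)^{-2k},\qquad
c_{k}^{-}\,t^{-2k}=\frac{4\,(6k-1)!6}{(6k)!6\,(6k+4)!6}\,(3t)^{-2k}.
\]
Since $c_{k}^{\pm}/c_{k-1}^{\pm}\to 0$ and $t=0$ is the only other singular point, both series converge for every $t\ne 0$, accounting for the range claimed in the lemma.

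The remaining and most delicate step is computing $c_{\pm}$. For $t>0$ I would substitute $x=(3t)^{-1/3}s$ in the integral defining $\phi$ to get $\phi(t)=\dfrac{(3t)^{-1/3}}{\sqrt{2\pi}}\displaystyle\int_{-\infty}^{\infty}e^{-\varepsilon s^{2}}e^{is^{3}/3}\,ds$ with $\varepsilon=\tfrac12(3t)^{-2/3}\to 0^{+}$, and expand $e^{-\varepsilon s^{2}}=1-\varepsilon s^{2}+\tfrac12\varepsilon^{2}s^{4}+\cdots$. Using the Airy identities $\int_{-\infty}^{\infty}s^{2j}e^{is^{3}/3}\,ds=2\pi(-1)^{j}\mathrm{Ai}^{(2j)}(0)$ (interpreted as limits of Gaussian–regularized integrals), together with $\mathrm{Ai}''(0)=0$ and $\mathrm{Ai}^{(4)}(0)=2\,\mathrm{Ai}'(0)$ (both immediate from $\mathrm{Ai}''(y)=y\,\mathrm{Ai}(y)$), the $\varepsilon^{1}$ term drops out and one finds
\[
\phi(t)=\sqrt{2\pi}\,\mathrm{Ai}(0)\,(3t)^{-1/3}+\frac{\sqrt{2\pi}}{4}\,\mathrm{Ai}'(0)\,(3t)^{-5/3}+o\!\left(t^{-5/3}\right),\qquad t\to+\infty.
\]
The hard part will be justifying this two–term expansion: the integrals above are only conditionally convergent and the remainder must be controlled uniformly in $\varepsilon$, which I would handle by a standard van der Corput / stationary–phase estimate, splitting the $s$-integral at $|s|\sim\varepsilon^{-\alpha}$ and integrating by parts on the outer range. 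Since $-1/3>-5/3>-7/3$, the two displayed powers do not interfere with the $O(t^{-2})$ corrections of $\phi_{\pm}$, so matching against $\phi=c_{+}t^{-1/3}+c_{-}t^{-5/3}+o(t^{-5/3})$ gives $c_{+}=\sqrt{2\pi}\,\mathrm{Ai}(0)\,3^{-1/3}=\dfrac{\sqrt{2\pi}}{3\,\Gamma(2/3)}$ and $c_{-}=\dfrac{\sqrt{2\pi}}{4}\,\mathrm{Ai}'(0)\,3^{-5/3}=-\dfrac{\sqrt{2\pi}}{36\,\Gamma(1/3)}$, using $\mathrm{Ai}(0)=3^{-2/3}/\Gamma(2/3)$, $\mathrm{Ai}'(0)=-3^{-1/3}/\Gamma(1/3)$.

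Finally I would assemble $\phi=\dfrac{\sqrt{2\pi}}{3\,\Gamma(2/3)}\,\phi_{+}-\dfrac{\sqrt{2\pi}}{36\,\Gamma(1/3)}\,\phi_{-}$ on $(0,\infty)$, insert the closed forms from the second step, and note that the factor $4$ in $c_{k}^{-}$ combines with $-\tfrac{1}{36}$ to yield $-\tfrac{1}{9}$; this reproduces exactly the asserted right-hand side with $t^{-1/3}$, $t^{-5/3}$ in place of $|t|^{-1/3}$, $|t|^{-5/3}$. Replacing $t$ by $|t|$ — legitimate since \eqref{phi 1}, each $(3t)^{-2k}$, and $\phi$ are all even in $t$ — extends the identity to all $t\ne 0$. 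Everything outside the oscillatory–integral estimate is routine bookkeeping with the recurrences and the $!6$-notation.
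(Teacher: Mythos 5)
Your proposal is correct, but it takes a genuinely different route from the paper. The paper never touches the Frobenius theory of \eqref{phi 1}: it quotes a classical integral representation of the Airy function with a Gaussian factor (Copson/Vall\'ee--Soares/NIST), and a single change of variables in the absolutely convergent integral defining $\phi$ yields the \emph{exact} identity \eqref{ai}, $\phi(t)=\sqrt{2\pi}\,3^{-1/3}t^{-1/3}e^{t^{-2}/108}\mathrm{Ai}\bigl(t^{-4/3}/(4\cdot 3^{4/3})\bigr)$ for $t>0$; the lemma's convergent expansion then comes from multiplying the Maclaurin series of $\exp$ and $\mathrm{Ai}$, with no asymptotic analysis at all. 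You instead build the two convergent Frobenius series at the regular singular point $t=\infty$ (your recurrences, closed forms in the $!6$ notation, and the constants $c_{+}=\sqrt{2\pi}/(3\Gamma(2/3))$, $c_{-}=-\sqrt{2\pi}/(36\Gamma(1/3))$ all check out, and the evenness/convergence arguments are sound), and you determine the connection coefficients by a two-term asymptotic matching. What your route buys is an explicit, mechanical derivation of the stated coefficients (the paper leaves the coefficient extraction implicit) and a structural explanation of why the series converges for every $t\neq 0$; what it costs is exactly the step you flag as hard, the uniform-in-$\varepsilon$ justification of the Gaussian-regularized Airy moments, which needs a van der Corput/integration-by-parts estimate that you sketch but do not carry out. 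Note, though, that this cost is avoidable even within your framework: your regularized integral is exactly $\int_{-\infty}^{\infty}e^{-\varepsilon s^{2}+is^{3}/3}\,ds=2\pi e^{2\varepsilon^{3}/3}\mathrm{Ai}(\varepsilon^{2})$, which is the paper's identity \eqref{ai} in disguise; invoking it (an absolutely convergent classical formula) makes your two-term expansion, and hence the connection constants, immediate and removes the only delicate point in your argument.
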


\begin{proof}
Airy function $\rm{Ai}(x)$ can be written as follows (\cite{copson}, \cite{vallee}, Chapter 9 of \cite{NIST_book_10}):
\[
\mathrm{Ai}(x)
= \frac{1}{\pi} e^{-\frac{2}{3}x^{3/2}} \int_0^\infty e^{-\sqrt{x}u^2} \cos\big( u^3/3\big) \,du \qquad(x>0). 
\]
By the transform $u=2^{-1/2}x^{-1/4}s$, we obtain
\begin{align}
\mathrm{Ai}(x)
&= \frac{1}{\sqrt{2}\pi}x^{-1/4} e^{-\frac{2}{3}x^{3/2}} \int_0^\infty e^{-s^2/2} \cos\bigg( \frac{x^{-3/4}}{2^{3/2}} \frac{s^3}{3} \bigg) \,ds.
\end{align}
From this and
\begin{align}
\phi(t) = \sqrt{\frac{2}{\pi}}\int_0^\infty e^{-x^2/2} \cos(tx^3)\, dx,
\end{align}
we obtain the following relation between $\phi(t)$ and $\rm{Ai}(t)$:
\begin{align}
\phi(t)
&= \frac{\sqrt{2\pi}}{3^{1/3}} t^{-1/3} e^{t^{-2}/108}  \mathrm{Ai}\bigg( \frac{t^{-4/3}}{4\cdot 3^{4/3}}\bigg)
\qquad(t>0). \label{ai}
\end{align}
Since the Maclaurin expansion of $\rm{Ai}(t)$ is known, the expansion of $\phi(t)$ in $1/t$ can be derived.
\end{proof}

By \eqref{ai} we see that $\phi(t) = O(t^{-1/3})\ (t\to\infty)$.
Hence, \eqref{k-th} is justified for $k<m+n/3$ since the integral on $[T,\infty)$ converges
for $k<m+n/3$.

In \eqref{k-th} the  integral on $[0,T]$ can be numerically computed by using the relation \eqref{ai}.

The integral on $[T,\infty)$ can be computed without numerical integral.
The values of $J_l = \int_{T'}^\infty e^{it}/t^{l/3} \, dt \ (l=1,2,\dots)$, where $T' = Tx_0$, is needed for computation of the integral on $[T,\infty)$.
$J_l$ satisfies the following recurrence relation:
\begin{align}
J_{l+3}
= \frac{3}{l} \bigg( \frac{e^{iT'}}{{T'}^{l/3}} + i J_l \bigg),
\end{align}
and thus it is sufficient to compute $J_1$, $J_2$ and $J_3$.
$J_1$ and $J_2$ can be reduced to the integral on $[0,T]$ from the formulae (cf.\ Section 5.9 of \cite{NIST_book_10})
\begin{align}
\int_0^\infty \frac{\sin x}{x^p}\,dx = \frac{\pi}{2\Gamma(p)\sin(p\pi/2)},\qquad
\int_0^\infty \frac{\cos x}{x^p}\,dx = \frac{\pi}{2\Gamma(p)\cos(p\pi/2)}\qquad(0<p<1).
\end{align}
$J_3$ can be computed  by Maclaurin expansion of trigonometric integrals.

The second term of the right side of \eqref{k-th} can be computed by Lemma \ref{lem:exp}.

From the above, the value of $f_n^{(k)}(x_0)\ (n=0,1,\dots,3n-1)$ can be computed and the differential equation satisfied by $f_n(x)$ can be numerically solved.

\section{Some discussions}
\label{sec:discussion}

In this paper we investigated properties of powers of functions 
satisfying a second-order holonomic differential equation.  
Our motivating example was the distribution of convolutions of cubes of standard normal random variables presented in Section \ref{subsec:cube}, 
which was in turn motivated by the algorithm given in \cite{Oaku2013}. 
In the course of our study of distribution of cubes of standard normal random variables, 
we noticed some remarkable properties satisfied by the characteristic function of
the cube of a standard normal random variable.
Based on this example, we developed more general theory presented in Section \ref{sec:main},
which may be relevant to problems in other areas of applied mathematics.

From a mathematical viewpoint, it is of interest to generalize the results of Section \ref{sec:main} to the case of powers of a general holonomic function. 
From a statistical viewpoint, it is of interest to investigate the distribution of the sum of 
the $r$-th power ($r\ge 4$) of standard normal random variables.

\section*{Acknowledgment}

We are grateful to C. Koutschan for computation of \eqref{eq:2f1-recursion}.




\bibliographystyle{abbrv}
\bibliography{second-order-holonomic}

\end{document}